\setlist[enumerate,1]{label=(\arabic*),ref=\arabic*}
\setlist[enumerate,2]{label=(\theenumi\alph*),ref=\theenumi\alph*}
\newcommand{\email}[1]{\href{mailto:#1}{#1}}
\newcommand{\T}{\mathbb{T}}
\newcommand{\Z}{\mathbb{Z}}
\newcommand{\E}{\mathbb{E}}
\newcommand{\R}{\mathbb{R}}
\newcommand{\N}{\mathbb{N}}
\newcommand{\e}{\mathrm{e}}
\newcommand{\ind}{\mathbf{1}}
\newcommand{\X}{\mathcal{X}}
\renewcommand{\L}{\mathcal{L}}
\renewcommand{\S}{\mathscr{S}}
\newcommand{\bigO}{\mathrm{O}}
\renewcommand{\geq}{\geqslant}
\renewcommand{\leq}{\leqslant}
\renewcommand{\t}{\mathsf T}
\renewcommand{\Im}{\operatorname{Im}}
\newcommand\sn[2]{{#1}\times 10^{#2}}
\DeclareMathOperator{\Div}{div}
\DeclareMathOperator{\Var}{Var}
\DeclarePairedDelimiter\ip{\langle}{\rangle}
\DeclarePairedDelimiter\abs{\lvert}{\rvert}
\DeclarePairedDelimiter\norm{\lVert}{\rVert}
\DeclarePairedDelimiter\paren{\lparen}{\rparen}
\DeclarePairedDelimiter\bkt{\lbrack}{\rbrack}
\definecolor{darkgreen}{rgb}{0.0, 0.5, 0.0}
\newtheorem{theorem}{Theorem}[section]
\newtheorem{corollary}[theorem]{Corollary}
\newtheorem{assumption}{Assumption}
\newtheorem{remark}[theorem]{Remark}
\newtheorem{lemma}[theorem]{Lemma}
\newtheorem{proposition}[theorem]{Proposition}
\theoremstyle{definition}
\crefname{assumption}{Assumption}{Assumptions}
\crefname{figure}{Figure}{Figures}
\let\oldparagraph=\paragraph
\renewcommand\paragraph[1]{\oldparagraph{#1.}}
\newcommand{\wcL}{\widetilde{\mathcal{L}}}
\newcommand{\G}{\mathcal{G}}
\newcommand{\Id}{\mathrm{Id}} 
\newcommand{\TestTTCF}{\estTmp^{T,K,\eta}_{\rm TTCF}}
\newcommand{\TestNEMD}{\estTmp^{T,\eta}_{\rm NEMD}}
\newcommand{\TestTTCFcenter}{\estTmp^{T,K,\eta}_{\rm center}}
\newcommand{\psip}{\widetilde{\mu}_\eta}
\newcommand{\bLip}{\|b\|_{\rm Lip}}
\newcommand{\RLip}{\|R\|_{\rm Lip}}
\newcommand{\mucoup}{\mu_{\rm coup}}
\newcommand{\Binfz}{B_{n,0}^{\infty}}
\newcommand{\GKest}{\mathcal{\widehat{\rho}}^{T,K}_{\rm GK}}
\newcommand{\estTmp}{\widehat{\rho}}
\newcommand{\Test}{\estTmp^{T,K,\eta}_{\rm trans}} 
\newcommand{\TSest}{\estTmp^{T,K,\eta}_{\rm sub}} 
\newcommand{\asTSest}{\estTmp^{T,\eta,\alpha}_{\rm sub}} 
\newcommand{\aTSest}{\estTmp^{T,K,\eta,\alpha}_{\rm sub}} 
\title{Transient subtraction: A control variate method for computing transport coefficients}
\author[1]{Pierre Monmarch\'e$^{a,}$}
\author[2,3]{Renato Spacek$^{b,}$}
\author[3,2]{Gabriel Stoltz$^{c,}$}
\affil[ ]{\footnotesize $^a$\email{pierre.monmarche@sorbonne-universite.fr},
                        $^b$\email{renato.spacek@enpc.fr},
                        $^c$\email{gabriel.stoltz@enpc.fr}}
\affil[1]{\footnotesize LJLL and LCT, Sorbonne Universit\'e, Paris, France}
\affil[2]{\footnotesize MATHERIALS team, Inria Paris, France}
\affil[3]{\footnotesize CERMICS, \'Ecole des Ponts, France}
\begin{document}
\maketitle

\begin{abstract}
In molecular dynamics, transport coefficients measure the sensitivity of the invariant probability measure of the stochastic dynamics at hand with respect to some perturbation. They are typically computed using either the linear response of nonequilibrium dynamics, or the Green--Kubo formula. The estimators for both approaches have large variances, which motivates the study of variance reduction techniques for computing transport coefficients. We present an alternative approach, called the \emph{transient subtraction technique} (inspired by early work by Ciccotti and Jaccucci in 1975), which amounts to simulating a transient dynamics started off equilibrium and relaxing towards the equilibrium state, from which we subtract a sensibly coupled equilibrium trajectory, resulting in an estimator with smaller variance. We present the mathematical formulation of the transient subtraction technique, give error estimates on the bias and variance of the associated estimator, and demonstrate the relevance of the method through numerical illustrations for various systems.
\end{abstract}

\section{Introduction}
\label{sec:introduction}
When considering large systems of interacting particles, quantities of interest are typically macroscopic properties, such as temperature and pressure, rather than microscopic ones. Generally, full microscopic descriptions are not only too large to be reasonably considered, but also largely uninteresting. From a numerical viewpoint, molecular dynamics provides an effective way of bridging the microscopic and macroscopic properties of such systems through computer simulations; see \cite{tuckerman2010, leimkuhler2015,allen2017} for reference textbooks. These simulations are typically done via the numerical realization of a stochastic differential equation (SDE), such as the Langevin dynamics, which evolves the positions~$q$ and momenta~$p$ as
\begin{equation}
\begin{aligned}
\begin{cases}
    dq_t = M^{-1} p_t \, dt, \\
    dp_t = -\nabla V(q_t) \, dt - \gamma M^{-1} p_t \, dt + \sqrt{\dfrac{2\gamma}{\beta}} \, dW_t,
    \label{eq:lang_dynamics}
\end{cases}
\end{aligned}
\end{equation}
where $V$ is the potential energy function, $M$ the mass matrix, $\gamma>0$ the damping coefficient, $\beta>0$ the inverse temperature (up to a factor~$k_\mathrm{B}$, with~$k_\mathrm{B}$ the Boltzmann constant) and~$W_t$ a standard multidimensional Brownian motion.

One particular application of molecular dynamics is the computation of transport coefficients (such as the diffusivity, mobility and shear viscosity), which encode important physical properties of materials, and in particular measure how quickly a perturbed system returns to steady-state. At the microscopic level, transport coefficients are defined as the proportionality constant between the magnitude $\eta\ll 1$ of some external forcing exerted on the system, and some flux induced by this forcing. The flux is represented as the steady-state average~$\E_\eta(R)$ for some given observable~$R$ with average 0 with respect to the equilibrium system $(\eta=0)$. This can be made precise through the framework of \emph{linear response theory}; see \cite[Chapter 8]{chandler1987} for an introduction, as well as~\cite{evans2016} for a more comprehensive treatment of nonequilibrium systems. To numerically realize this, one considers a nonequilibrium system by adding a perturbation of magnitude $\eta$ to the reference dynamics at hand (e.g., Langevin dynamics), and the appropriate flux is then measured as a time-average over a long trajectory; this is known as the nonequilibrium molecular dynamics (NEMD) method \cite{ciccotti2005}.

Alternatively, the linear response can be reformulated as an equilibrium integrated correlation, known as the \emph{Green--Kubo} (GK) formula \cite{green1954,kubo1957}. Both the NEMD and Green--Kubo methods are commonly used, and each has their advantages and drawbacks; see \cite{stoltz2024} for a detailed comparison of both approaches. 

Although less common, a third class of techniques consists of methods based on transient dynamics, which typically rely on monitoring the system's relaxation to steady-state after an initial perturbation (unlike the NEMD and GK methods, which are based on steady-state averages). As will be made precise in \cref{subsec:num_tech}, transient methods can be applied in two different ways: (i) starting from an equilibrium system with perturbed initial conditions, and allowing the system to relax back to its equilibrium steady-state, e.g., the momentum impulse relaxation \cite{arya2000} and the approach-to-equilibrium molecular dynamics methods \cite{lampin2013}; or a somewhat dual approach, carried out by (ii) applying a driving force to an equilibrium system and monitoring its relaxation towards a nonequilibrium steady-state, such as the transient-time correlation function (TTCF) method~\cite{morriss1987,evans1988}. This method provides an unbiased estimator of the full nonlinear response whatever the magnitude of the forcing (in the limit of infinite integration times), in contrast to the class of techniques based on relaxation to equilibrium. It was used to study various systems under realistic (\emph{i.e.} small enough) forcings, see for instance~\cite{delhommelle2005, pan2006, bernardi2012, bernardi2016, maffioli2022, maffioli2024} for some works. A more precise discussion is provided in Sections~\ref{subsec:TTCF} and \ref{sec:error_TTCF}.

All three classes of methods suffer from severe numerical difficulties, in particular because of large statistical errors, as made precise in \cref{subsec:num_tech}. For NEMD, the statistical error mainly arises from the large signal-to-noise ratio (due to the small magnitude of the perturbation $\eta$), which requires long integration times to offset the variance. For Green--Kubo, the statistical error scales linearly with the integration time $T$, while correlations decay in time, so it amounts to integrating a small quantity plagued by a large statistical error \cite{sousaoliveira2017}. Similar estimates are obtained for the TTCF method, see the discussion in \cref{sec:error_TTCF}. 

There have been several attempts at more efficient methods to compute transport coefficients in the context of variance reduction \cite{pavliotis2023,spacek2023,blassel2024}. In particular, one such method is known as the \emph{subtraction technique}, developed and investigated in \cite{ciccotti1975}, and further explored in \cite{ciccotti1979}, in the context of transient methods. The method is based on computing the difference between two trajectories, one started at equilibrium and one started slightly off equilibrium and relaxing to equilibrium. As discussed in both works~\cite{ciccotti1975,ciccotti1979} (which consider a deterministic framework), the high correlation between trajectories is a natural artifact of the deterministic dynamics for reasonably short integration times. This allows for the statistical error to be effectively subtracted out through the equilibrium trajectory, thus making the subtraction step an effective control variate. In stochastic settings, however, using independent noises for equilibrium and nonequilibrium trajectories (corresponding to $\eta=0$ and $\eta\ne 0$, respectively) results in uncorrelated trajectories. This suggests the need for constructing a sensible coupling between the two systems; otherwise, the subtraction step would essentially amount to adding two independent random variables, doubling the variance of the estimator at hand. 

One way to overcome this issue is to consider couplings, which have been used as a control variate to compute transport coefficients \cite{goodman2009,garnier2022}. One particular example of a common coupling strategy is synchronous coupling, which amounts to using the same noise for both dynamics. A major challenge with coupling techniques, however, is ensuring that trajectories stay coupled for long times. This is especially problematic for systems which rely on long-time averages for convergence, e.g., NEMD \cite{darshan2024}. Typically, one hopes to obtain convergence of time-averages before trajectories decouple, but this cannot be assumed in general without additional (and often restrictive) requirements.

Synchronous coupling, for instance, typically requires conditions such as global dissipativity in order to ensure long-time couplings of the trajectories. In general, however, global dissipativity is only obtained under strong conditions. One such example is when overdamped Langevin dynamics $dq_t = -\nabla V(q_t) \, dt + \sqrt{2\beta^{-1}} \, dW_t$ are considered, for some strongly convex potential $V$, which is too restrictive a requirement for actual applications in MD. This suggests that synchronous coupling is typically impractical for estimators that require long-time integration such as NEMD, as the decoupling time is much shorter than the time needed for convergence with no global dissipativity. Let us however mention that, in some cases, for instance at high temperatures, synchronously coupled trajectories might not decouple at all even with no global dissipativity; see \cite{monmarche2023}. A natural way to address this problem would be to construct couplings with milder conditions which guarantee long-time couplings but this remains challenging (see for instance~\cite{darshan2024}).

We adopt in this work an alternative viewpoint: We consider methods for which convergence of an observable is feasible over short-times. In particular, we devise a transient method, consisting of an initially perturbed trajectory relaxing to equilibrium. We thus do not require long-time averages for convergence, which suggests that we can use synchronous coupling under weak conditions, provided that the relaxation time is smaller than the decoupling time. Indeed, even though the dynamics might start to decouple before relaxation, the variance of the estimator for the transport coefficient might nonetheless be decreased due to the control variate.

For systems with longer relaxations times, it may be beneficial to turn to another transient method, namely TTCF; see the discussion in Sections~\ref{sec:error_TTCF} and~\ref{sec:conclusion}. It would be interesting to compare both approaches on test cases and realistic systems, but this is outside the scope of this work.

\paragraph{Outline} This work is organized as follows. We discuss in \cref{sec:trans_coeff} some standard numerical methods for approximating transport coefficients, and present approaches based on integrating dynamics in the transient regime. Then, by employing the subtraction technique to the transient method starting off equilibrium and relaxing to equilibrium, we construct in \cref{sec:transient} an improved transient subtraction estimator. We provide some error analysis on its bias and variance. We then illustrate the efficacy of our method with numerical results for several systems in \cref{sec:numerical_lang}, namely by computing the mobility for one-dimensional Langevin dynamics, and mobility and shear viscosity for a Lennard--Jones fluid. Finally, conclusions and extensions are discussed in \cref{sec:conclusion}.

\section{Transient method to compute transport coefficients}
\label{sec:trans_coeff}
We discuss in this section the definition and computation of transport coefficients, and in particular the use of a transient method for their approximation. We start by presenting in \cref{subsec:gen_setting} the setting used to compute transport coefficients for a general SDE, then overview their standard numerical approximations and associated numerical difficulties in \cref{subsec:num_tech}. We then introduce the transient method we consider in this work in \cref{subsec:transient}, and also the dual approach, namely TTCF, in \cref{subsec:TTCF}. 

\subsection{General setting}
\label{subsec:gen_setting}
Consider a general time-homogeneous SDE with additive noise defined on the state-space $\mathcal{X}$, where~$\mathcal{X}$ is typically $\R^d$ or $\T^d$ (with $\T = \R/\Z$ the one-dimensional torus):
\begin{equation}
    dX_t = b(X_t) \, dt + \sigma \, dW_t,
    \label{eq:general_SDE}
\end{equation}
where $b\colon \mathcal{X} \to \R^d$ is a smooth function, $\sigma \in \R^{d\times m}$ is a constant matrix and $W_t$ is a standard~$m$-dimensional Brownian motion. We assume that~\eqref{eq:general_SDE} admits a unique strong solution (which is the case for instance when $b$ is globally Lipschitz). We restrict ourselves to cases where $\sigma$ is constant, as the dynamics of interest considered later on, namely Langevin dynamics, only involve additive noise, and also because the coupling method we introduce in \cref{subsec:constructing_method} is considerably easier to formulate in this setting. The dynamics \eqref{eq:general_SDE} has associated infinitesimal generator
\begin{equation}
    \notag
    \L = b^\t\nabla + \frac{1}{2}\sigma\sigma^\t \colon \nabla^2 = \sum_{i=1}^d b_i\partial_{x_i} + \frac{1}{2}\sum_{i,j=1}^d\sum_{k=1}^m \sigma_{ik}\sigma_{jk} \partial_{x_ix_j}^2,
\end{equation}
where $\colon$ denotes the Frobenius inner product. Throughout this work, we assume that~\eqref{eq:general_SDE} admits a unique invariant probability measure $\mu$ with a positive density with respect to the Lebesgue measure. 
We denote by 
\[
L^2_0(\mu) = \left\{\varphi \in L^2(\mu) \, \middle| \, \int_\X \varphi \, d\mu = 0\right\}
\]
the space of~$L^2(\mu)$ functions with average~0 with respect to~$\mu$.

Transport coefficients measure how the steady state of the reference dynamics~\eqref{eq:general_SDE} changes when some external forcing is applied to it. This external forcing typically arises as an extra drift term of magnitude~$\eta$, with~$|\eta|$ small in order for the forcing to be considered as a small perturbation. In this context, the transport coefficient~$\rho$ is defined as the proportionality constant between the steady-state flux of some observable $R$ of interest, and the magnitude of the external forcing needed to induce it, known as the \emph{linear response}; see \cite[Chapter 8]{chandler1987} for an introduction to linear response theory, and for instance \cite[Section 2]{spacek2023} for a synthetic presentation. We assume that the observable~$R$ has average~0 with respect to $\mu$ (without loss of generality, as it can always be recentered in case it has a nonzero average). The linear response can be reformulated in terms of an integrated time-correlation function, known as the Green--Kubo formula. For simplicity, we do not further recall the framework of linear response theory and instead directly write the Green--Kubo formula:
\begin{align}
  \rho = \int_0^{+\infty} \E_\mu(R(X_t) S(X_0)) \, dt,\label{eq:gk}
\end{align}
where $S \in L^2_0(\mu)$ is the conjugate response function, which depends on the extra drift term added to perturb the dynamics (see \cite[Section 5.2.3]{lelievre2016} for a precise definition, and~\eqref{eq:conjugate_response} below), and where the expectation $\E_\mu$ is taken with respect to all initial conditions~$X_0\sim \mu$, and over all realizations of the dynamics~\eqref{eq:general_SDE}. Let us emphasize that the conjugate response $S$ has average 0 by construction.

\subsection{Numerical techniques to compute transport coefficients}
\label{subsec:num_tech}
Transport coefficients can be numerically estimated using a variety of techniques. Generally, such techniques fall into one of three main categories (see \cite{lelievre2016,stoltz2024} for a detailed discussion and elements of numerical analysis):

\begin{enumerate}
    \item {\bf Equilibrium techniques based on the Green--Kubo formula \eqref{eq:gk}.} In order to numerically realize \eqref{eq:gk}, one constructs an estimator by (i) truncating the time-integral to finite integration time $T$; and (ii) approximating the expectation with an average over $K$ independent trajectories of the system $(X_t^k)_{t\geq 0}$ with $1\leq k\leq K$. This leads to the following natural estimator:
    \begin{equation}
        \GKest = \frac{1}{K}\sum_{k=1}^K \int_0^T R(X_t^k)S(X_0^k) \, dt.
        \label{eq:GK_est}
    \end{equation}
The sources of error associated with the estimator \eqref{eq:GK_est} are

\begin{itemize}
    \item A statistical error $\bigO(T)$, which scales linearly with the time lag \cite{sousaoliveira2017,plechac2022, pavliotis2024,gastaldello2024} and is typically the largest source of error;
    \item A time truncation bias, which is small as correlations are typically exponentially decaying (as discussed for instance in~\cite{plechac2022});
    \item A discretization bias, which arises from the finiteness of the timestep used to discretize~\eqref{eq:general_SDE}, and from quadrature formulas for the time integral \cite{leimkuhler2016,lelievre2016}.
\end{itemize}
The various sources of error suggest carefully choosing $T$ in order to minimize the error as a tradeoff between $T$ large enough for the time truncation bias to be small, but not too large in order to limit the increase in variance.
    
    \item {\bf Nonequilibrium steady-state techniques.} This method is based on linear response theory. It amounts to permanently adding an external forcing to the system, which induces a nonzero flux in the steady-state. The transport coefficient is then obtained by diving the average flux by the magnitude of the perturbation, for small values of the perturbation in order to ensure one stays in the linear response regime.
       
   There are several sources of error associated with this technique. In particular, the main concern is the statistical error, much larger than the usual asymptotic variance for standard time averages due to the small magnitude of the forcing. See \cref{rmk:NEMD_vs_TTCF} and~\cite[Section~5]{lelievre2016}, \cite[Section~2]{spacek2023} and \cite[Section~3]{leimkuhler2016} for a more detailed discussion on the numerical analysis of nonequilibrium methods.
   
    \item {\bf Transient methods.} While both Green--Kubo and nonequilibrium methods are based on steady-state dynamics, transient methods provide an alternative framework by monitoring the system’s relaxation to a steady-state after an initial perturbation, and can be classified into two main approaches.
    
    \begin{enumerate}
    \item {\bf Relaxation to equilibrium:} A typical scenario is to perturb an equilibrium system by creating an initial profile of momentum or energy, for instance, which is then allowed to relax to an equilibrium steady-state through the time-evolution of the equilibrium dynamics. Transport coefficients are obtained either by computing the integral of some response, or by monitoring transient profiles and matching them to the solution of a macroscopic effective PDE parametrized by the transport coefficient at hand by some form of inverse problem fitting. Examples include the method proposed in \cite{hulse2005} to compute the thermal conductivity, the momentum impulse relaxation method \cite{arya2000}, and the approach-to-equilibrium molecular dynamics method \cite{lampin2013}. See Section~\ref{subsec:transient} for a more precise presentation.
                
        \item {\bf Relaxation to the nonequilibrium steady-state:} In a somewhat dual approach, one can alternatively start with an equilibrium system and drive it towards a nonequilibrium steady-state by applying an external forcing to the dynamics. The relaxation to a nonequilibrium steady-state is then monitored, from which the transport coefficient can be obtained. This corresponds to the TTCF method~\cite{morriss1987,evans1988}, which generalizes the Green--Kubo relations to nonlinear regimes; see Section~\ref{subsec:TTCF} for a more thorough presentation.
    \end{enumerate}
\end{enumerate}
The limitations and drawbacks listed above suggest that there is space for alternative approaches, in particular in the context of variance reduction; this motivates the construction of the transient subtraction method in \cref{sec:transient}.

\subsection{Transient method: relaxation to equilibrium}
\label{subsec:transient}

As discussed in \cref{subsec:num_tech}, an alternative approach to the NEMD and GK methods for computing transport coefficients is based on transient dynamics. We describe here the first option, namely relaxation to the equilibrium state. The fundamental idea is that, instead of applying an external forcing to the dynamics, or computing correlations for the equilibrium dynamics, we start from an initially perturbed system, and monitor its relaxation to the reference state by evolving equilibrium dynamics.

\paragraph{Mathematical formulation} The transient method in this context relies on two main ingredients: (i) perturbing the distribution of initial conditions at order $\bigO(\eta)$ with $\eta\ll 1$; and (ii) monitoring return to stationarity via time integration. More precisely, we consider a process $X_t^\eta$ which evolves according to the reference dynamics \eqref{eq:general_SDE}, with $X_0^\eta \sim \psip$. The probability measure $\psip$ is assumed to be a first-order perturbation of the invariant probability measure of the reference dynamics $\mu$, satisfying
\begin{equation}
	\psip = (1+\eta S)\mu + \bigO(\eta^2).
	\label{eq:init_dist}
\end{equation}
We then evolve the process $X_t^\eta$ according to the reference equilibrium dynamics, which relaxes over time to its equilibrium steady-state. In particular, although not immediately clear, the time integral of the expectation of $R(X_t^\eta)$, when divided by $\eta$, converges to the transport coefficient $\rho$ as $\eta$ goes to~$0$:
\begin{equation}
	\rho = \lim_{\eta\to 0} \frac{1}{\eta}\int_0^{+\infty} \E(R(X_t^\eta)) \, dt.
	\label{eq:transient_def}
\end{equation}
To motivate the equality \eqref{eq:transient_def}, we consider finite $\eta\ll 1$. By writing the expectation in terms of the semigroup, and using that $(\e^{t\L}R)(x) = \E^x[R(X_t)]$ (the expectation being with respect to realizations of~\eqref{eq:general_SDE} started from~$X_0=x$) has average 0 with respect to $\mu$ (by the invariance of $\mu$ by the dynamics and the fact that $R$ has average 0 with respect to $\mu$), we have, informally,
\begin{align}
    \label{eq:gk_equiv1}
	\frac{1}{\eta}\int_0^{+\infty} \E(R(X_t^\eta)) \, dt &= \frac{1}{\eta} \int_0^{+\infty} \int_\mathcal{X} \bigl(\e^{t\L} R\bigr) \, d\psip \, dt \\
	\notag
	&= \frac{1}{\eta}\int_0^{+\infty} \int_\mathcal{X} \e^{t\L} R \, d\mu \, dt + \int_0^{+\infty} \int_\mathcal{X} \bigl(\e^{t\L} R\bigr) S \, d\mu \, dt + \bigO(\eta) \\
	\notag
	&= \int_0^{+\infty} \int_\mathcal{X} \bigl(\e^{t\L} R\bigr) S \, d\mu \, dt + \bigO(\eta) \\
	&= \int_0^{+\infty} \E_\mu\bigl(R(X_t)S(X_0)\bigr) dt + \bigO(\eta).
    \label{eq:gk_equiv2}
\end{align}
It is clear that by letting $\eta\to 0$ we get the correct result, i.e., \eqref{eq:transient_def} is equivalent to the Green--Kubo formula \eqref{eq:gk}:
\begin{equation}
    \notag
	\lim_{\eta\to 0}\frac{1}{\eta}\int_0^{+\infty} \E(R(X_t^\eta)) \, dt = \int_0^{+\infty} \E_\mu(R(X_t) S(X_0)) \, dt.
\end{equation}
We recall that $\E_\mu$ denotes the expectation with respect to the reference dynamics started at equilibrium, while $\E$ on the left-hand side denotes the expectation with respect to the reference dynamics initialized as $X_0^\eta \sim \psip$.

The above discussion is an informal presentation of the method, and is done for motivational purposes; see \cref{prop:gen_subtraction} for the formal meaning of the initial distribution \eqref{eq:init_dist}, and the rigorous form of the computation \eqref{eq:gk_equiv1}--\eqref{eq:gk_equiv2}.  

\paragraph{Estimators of transient dynamics relaxing to equilibrium} In practice, numerically estimating \eqref{eq:transient_def} requires first approximating the limit with (sufficiently small) finite $\eta$, truncating the time integral to finite $T$, and approximating the expectation with an average over $K$ realizations of the dynamics started from i.i.d.\ initial conditions $X_0\sim\psip$. This leads to the following estimator for \eqref{eq:transient_def}:
\begin{equation}
	\Test = \frac{1}{\eta K}\sum_{k=1}^K \int_0^T R(X_t^{\eta,k}) \, dt.	
	\label{eq:T_estimator}
\end{equation}
Although these approximations lead to several sources of bias in \eqref{eq:T_estimator}, which are made precise in \cref{subsubsec:bias_analysis}, the primary concern associated with \eqref{eq:T_estimator} is its very large variance, as we discuss next. This disqualifies it as an appropriate numerical method.

\paragraph{Asymptotic variance of usual transient estimator} The asymptotic variance of the estimator~\eqref{eq:T_estimator} is
\begin{equation}
    \notag
	\lim_{T\to+\infty} T^{-1}\Var\bigl(\Test\bigr) = \frac{2}{K\eta^2}\int_\mathcal{X} R\paren*{-\L^{-1} R} \, d\mu.
\end{equation}
It corresponds to the usual asymptotic variance for time averages of ergodic equilibrium dynamics, except for the very large prefactor $1/\eta^2$. 

Unlike the usual NEMD or Green--Kubo estimators of transport coefficients discussed in \cref{subsec:num_tech}, the variance of~\eqref{eq:T_estimator} is magnified by two distinct contributions. First, as with NEMD, we divide~\eqref{eq:T_estimator} by $\eta\ll 1$ which gives rise to the $\bigO(\eta^{-2})$ factor. Second, since the estimator is not a time average but a time integral as with GK and TTCF, the variance also scales linearly in~$T$, as opposed to the typical scaling~$\bigO(1/T)$ for time-averages. This leads to variance of order~$\bigO(T\eta^{-2})$, much higher than its NEMD and GK counterparts. 

This result calls for modifying the estimator with the use of variance reduction techniques, in particular to get rid of the $\eta^{-2}$ contribution and obtain bounds uniform in~$\eta$ as for TTCF. To this end, we consider the use of \emph{couplings} as a control variate, which are discussed more precisely in Section~\ref{sec:transient}.

\subsection{Transient method: relaxation to the nonequilibrium steady-state}
\label{subsec:TTCF}

Transient-time correlation functions~\cite{morriss1987,evans1988} can be seen as a dual approach to relaxation-to-equilibrium transient techniques: instead of starting from an initially perturbed state and relaxing towards equilibrium with the reference dynamics, the system is started at equilibrium and evolves according to the nonequilibrium dynamics. We provide in this section a formal derivation of the method, formulated for general dynamics, which can be deterministic or stochastic (as in~\cite{pincus2023} for instance).

\paragraph{Derivation of the TTCF method}

Consider the following nonequilibrium dynamics started under the reference distribution~$\mu$: 
\begin{equation}
  \label{eq:noneq_dynamics_started_eq}
  dY_t^\eta = \left( b(Y_t^\eta) + \eta F(Y_t^\eta) \right) dt + \sigma dW_t, \qquad Y_0^\eta \sim \mu,
\end{equation}
where~$F:\X\to\R^d$ is some external driving on the system (assumed to be a bulk driving for simplicity of exposition). We assume that~\eqref{eq:noneq_dynamics_started_eq} admits a unique invariant probability measure, denoted by~$\mu_\eta$ (with~$\mu_0 = \mu$). Denoting by~$\G_\eta = \L + \eta \wcL$ with~$\wcL = F^\top \nabla$ the generator of the above dynamics, the average value of some response function~$R$ at time~$T$ is
\[
\E\left[R(Y_T^\eta)\right] = \int_\X \e^{T \G_\eta} R \, d\mu = \int_\X R \, d\mu + \int_0^T \int_\X \G_\eta \e^{t \G_\eta}R \, d\mu \, dt,
\]
where we made use of the operator identity 
\[
\e^{T \G_\eta} = \Id + \int_0^T \G_\eta \e^{t \G_\eta} \, dt.
\]
Now, recalling that the conjugate response introduced in Section~\ref{subsec:gen_setting} is actually equal to (see~\cite[Section 5.2.3]{lelievre2016})
\begin{equation}
  \label{eq:conjugate_response}
  S = \wcL^* \mathbf{1},
\end{equation}
where adjoints are taken on~$L^2(\mu)$ (see \eqref{eq:Astar_adjoint} below for a more precise definition), we find that~$\G_\eta^* \mathbf{1} = \eta S$ (since~$\L^* \mathbf{1} = 0$ by the invariance of~$\mu$ under the reference dynamics), and therefore
\[
\E\left[R(Y_T^\eta)\right] = \int_\X R \, d\mu + \int_0^T \int_\X \left(\e^{t \G_\eta}R\right) \G_\eta^* \mathbf{1} \, d\mu \, dt = \int_\X R \, d\mu + \eta \int_0^T \E\left[ R(Y_t^\eta) S(Y_0^\eta)\right] \, dt.
\]
This equality can be rewritten as
\[
\frac{\E\left[R(Y_T^\eta)\right] - \mu(R)}{\eta} = \int_0^T \E\left[ R(Y_t^\eta) S(Y_0^\eta)\right] \, dt.
\]
By passing to the limit~$T \to +\infty$, denoting by~$\E_\eta$ the expectation with respect to the steady-state probability measure of the nonequilibrium dynamics~$(Y_t^\eta)_{t \geq 0}$, we can conclude that
\begin{equation}
  \label{eq:NL_response}
  \frac{\E_\eta(R)-\E_0(R)}{\eta} = \int_0^{+\infty} \E\left[ R(Y_t^\eta) S(Y_0^\eta)\right] \, dt,
\end{equation}
where the expectation on the right-hand side is with respect to the dynamics~\eqref{eq:noneq_dynamics_started_eq} (started under the equilibrium distribution~$\mu$ and relaxing towards the steady-state~$\mu_\eta$). The time integral represents the cumulated effect of a transient phase of the dynamics. Note that~\eqref{eq:NL_response} gives the exact (nonlinear) response, whatever the value of~$\eta \neq 0$.

\begin{remark}[Relationship with Green-Kubo formulas]
  The usual Green--Kubo formula is recovered in the limit~$\eta \to 0$ in~\eqref{eq:NL_response}. Note however that, compared to GK simulations, TTCF allows to recover response profiles as in NEMD, by computing for instance local thermodynamic properties based on~\eqref{eq:NL_response} (for instance, momentum profiles for shear flows, kinetic temperature profiles for thermal transport, etc).
\end{remark}

\paragraph{Estimators of the nonlinear response}

In view of~\eqref{eq:NL_response}, the natural TTCF estimator
\begin{equation}
  \label{eq:TTCF_estimator}
  \TestTTCF = \frac{1}{K} \sum_{k=1}^K \int_0^T R(Y_t^{\eta,k}) S(Y_0^{\eta,k}) \, dt, \qquad Y_0^{\eta,k} \sim \mu,
\end{equation}
with independent initial conditions, is asymptotically unbiased as~$t \to +\infty$ for any value of~$\eta$ (whatever large), and allows therefore to capture the full nonlinear response (in contrast to the transient methods based on relaxation to equilibrium where some care in the design of the initial condition is required in order to limit the~$\eta$ bias). This is admittedly a strength of TTCF approaches. Moreover, the statistical error of~\eqref{eq:TTCF_estimator} can be bounded uniformly in~$\eta$ for~$|\eta|$ small. It however diverges with the integration time, similary to Green-Kubo type methods, as discussed more precisely in Section~\ref{sec:error_TTCF}.

Two options are considered in the literature in order to perform some variance reduction for this method:
\begin{itemize}
\item A first option is to rely on antithetic variables, encoded through an involution~$\mathscr{M}$ acting on the phase space variables (for instance changing the sign of some momenta), and possibly changing the sign of the conjugate response~$S$. Nonequilibrium trajectories are then started from initial conditions~$Y_0^{\eta,n}$ for~$1 \leq n \leq N$, and also from the transformed initial conditions~$\mathscr{M}Y_0^{\eta,n} = \widehat{Y}_0^{\eta,n}$. This goes under the name of trajectory mappings, and is discussed in~\cite[Section~7.4]{evans2007} and subsequent works making use of this idea such as~\cite{delhommelle2005, maffioli2022, maffioli2024}.
\item Another option is to perform some recentering of the function~$R$ by subtracting off (an estimator of) its steady-state average, as discussed around~\cite[Eq.~(3)]{maffioli2024}. This allows to reduce the scaling of the asymptotic variance of the estimator from~$t^2$ to~$t$; see Section~\ref{sec:error_TTCF}.  
\end{itemize}

\section{Transient subtraction method}
\label{sec:transient}

We propose in this section a method called \emph{transient subtraction technique}, which employs a subtraction technique similar to the one suggested in \cite{ciccotti1975} to the transient dynamics method discussed in \cref{subsec:transient} as a means for variance reduction. We first outline in \cref{subsec:constructing_method} the construction of the method, then present the numerical analysis of its associated estimators in \cref{subsec:num_anal_ts}. We finally provide some elements of numerical analysis for the TTCF method in \cref{sec:error_TTCF} in order to compare more precisely the transient subtraction technique we propose to TTCF (see the discussion in \cref{sec:conclusion}).

\subsection{Constructing the method}
\label{subsec:constructing_method}
In the transient dynamics setting of \cref{subsec:transient}, one can consider the use of couplings as a control variate approach to construct an estimator with lower variance than~\eqref{eq:T_estimator}. To this end, we consider the coupling $(X_t^\eta,Y_t^0)$, where the processes $X_t^\eta$ and $Y_t^0$ are evolved according to the same underlying reference dynamics and have different initial conditions:
\begin{equation}
\begin{cases}
\begin{aligned}
	dY_t^0 &= b(Y_t^0) \, dt + \sigma \, dW_t, \qquad Y_0^0 \sim \mu, \\
	dX_t^\eta &= b(X_t^\eta) \, dt + \sigma \, d\widetilde{W}_t, \qquad X_0^\eta \sim \psip,
\end{aligned}
\end{cases}
\label{eq:coupled_dynamics}
\end{equation}
where $W_t$ and $\widetilde{W}_t$ are standard $m$-dimensional Brownian motions. The transport coefficient $\rho$ can then be computed as
\begin{equation}
\rho = \lim_{\eta\to 0} \frac{1}{\eta}\int_0^{+\infty} \E\paren*{R(X_t^\eta) - R(Y_t^0)}\,  dt.
	\label{eq:tc_subtraction}
\end{equation}
Note that $\int_0^{+\infty} R(Y_t^0) \, dt$ acts as a control variate since $\E(R(Y_t^0))=0$ for all $t\geq 0$. The expression~\eqref{eq:tc_subtraction} admits the following natural estimator, carried out with independent initial conditions for the couple $(X_t^{\eta,k},Y_t^{0,k})_{t\geq 0}$ for $1\leq k\leq K$ and independent realizations of the dynamics~\eqref{eq:coupled_dynamics}:
\begin{equation}
    \TSest = \frac{1}{\eta K} \sum_{k=1}^K \int_0^T \bkt[\Big]{R(X_t^{\eta,k}) - R(Y_t^{0,k})} \, dt.
    \label{eq:ts_estimator}
\end{equation}
A sufficient condition for \eqref{eq:ts_estimator} to have smaller variance than the standard estimator~\eqref{eq:T_estimator} is for the trajectories to start $\eta$ close, and to stay close for times of order $1/\lambda$, where $\lambda$ is the relaxation rate of the system to the stationary state (see \cref{as:decay_semigroup}). More precisely,

\begin{enumerate}
	\item \label{enum:init_dist} The initial distance $|X_0^\eta - Y_0^0|$ should be of order $\eta$;
	\item \label{enum:stay_close} The dynamics should remain $\eta$ close for finite times as the copies of the system evolve, i.e., $|X_t^\eta - Y_t^0|$ must be of order $\eta$ for $t\leq T$.
\end{enumerate}
Condition \ref{enum:init_dist} amounts to finding a coupling measure which is concentrated along the diagonal in the~$(x,y)$ space, so that initial conditions are $\eta$ close. We emphasize that, although $\psip$ is by construction a $\bigO(\eta)$ perturbation of~$\mu$, this is not enough to guarantee that the trajectories start $\eta$ close when the initial conditions are independent, thus we require a coupling on the initial conditions.

 We discuss in \cref{subsubsec:synchronous_coup} a natural way of coupling the dynamics \eqref{eq:coupled_dynamics}, and outline sufficient conditions for condition \ref{enum:stay_close} to hold. Then, we formally construct the coupling measure on the initial conditions and discuss its properties in \cref{subsubsec:properties_init_cond}.
 
\begin{remark}[{\bf Tangent dynamics}]
The expression \eqref{eq:tc_subtraction} of the transport coefficient can be formulated in terms of tangent dynamics \cite{assaraf2017}. Denote by $\mathcal{T}_t\in\R^d$ the tangent vector, where
\begin{equation}
\notag
	\mathcal{T}_t 
= \lim_{\eta\to 0} \frac{X_t^\eta - X_t^0}{\eta}.
\end{equation} 
This vector evolves according to a random ordinary differential equation, obtained by linearizing \eqref{eq:general_SDE}. Moreover,
\eqref{eq:tc_subtraction} can be written as
\begin{equation}
\notag
	\rho = \lim_{\eta\to 0} \frac{1}{\eta}\int_0^{+\infty} \E\paren*{R(X_t^\eta) - R(Y_t^0)} \, dt = \int_0^{+\infty} \E\paren*{\mathcal{T}_t\cdot \nabla R(X_t^0)} \, dt.
\end{equation}
\end{remark}

\subsubsection{Synchronous coupling}
\label{subsubsec:synchronous_coup}
A natural way to ensure that the dynamics remain close is via synchronous coupling, which amounts to using the same Brownian motion for both processes, i.e., setting $\widetilde{W} = W$. It is known that synchronous coupling performs well in the presence of global dissipativity. Without it, however, trajectories decouple and we cannot control the coupling distance for long times. For the transient subtraction method, we do not require long-time results, as the relaxation time of the estimator~\eqref{eq:ts_estimator} is typically of order $\bigO(1/\lambda)$, with $\lambda$ the exponential convergence rate from \cref{as:decay_semigroup} below. Thus, this suggests that synchronous coupling is an admissible control variate as long as the relaxation time is smaller than the decoupling time.

In order to more precisely state some results on the coupling distance, we give a sufficient condition for trajectories to decouple at most exponentially in time in the following assumption.

\begin{assumption}
\label{as:contractivity}
	There exists $B\in\R$ such that the drift $b\colon \mathcal{X} \to \R^d$ satisfies
\begin{equation}
		\forall (x,y) \in \mathcal{X}^2, \qquad \langle x-y, b(x)-b(y)\rangle \leq B|x-y|^2.
		\label{eq:no_dissip}
	\end{equation}
\end{assumption}

A sufficient condition for \eqref{eq:no_dissip} to be satisfied is when the drift $b$ is globally Lipschitz with constant~$\bLip$, in which case $B = \bLip$. In some fortunate cases where $B<0$, the drift is globally dissipative. In particular, global dissipativity ensures uniform exponential decay of the coupling distance $|X_t^\eta - Y_t^0|$.  We next state a standard result providing an upper bound for how fast trajectories decouple based on the estimate considered in \Cref{as:contractivity}.

\begin{lemma}
	\label{lemma:decoupling_times}
	Suppose that \Cref{as:contractivity} holds. Then, almost surely,
\begin{equation}
\notag
\forall t\geq 0, \qquad |X_t^\eta - Y_t^0| \leq \e^{tB}|X_0^\eta - Y_0^0|.
	\end{equation}
\end{lemma}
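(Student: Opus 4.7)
The plan is to exploit the fact that, under synchronous coupling (as set up in \Cref{subsubsec:synchronous_coup}, with $\widetilde{W} = W$), the stochastic integrals in \eqref{eq:coupled_dynamics} coincide pathwise, so the difference process $Z_t := X_t^\eta - Y_t^0$ is actually driven by a random ordinary differential equation rather than an SDE. Explicitly, almost surely,
\[
dZ_t = \bigl(b(X_t^\eta) - b(Y_t^0)\bigr)\, dt,
\]
so $t \mapsto Z_t$ is absolutely continuous along each trajectory.

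Next, I would work with the squared norm $|Z_t|^2$ to avoid differentiability issues at zeros of $|Z_t|$. Since $|Z_t|^2$ is $C^1$ in $t$ along each trajectory, the chain rule yields
\[
\frac{d}{dt} |Z_t|^2 = 2 \bigl\langle Z_t, \, b(X_t^\eta) - b(Y_t^0) \bigr\rangle \leq 2 B |Z_t|^2,
\]
where the inequality is exactly \eqref{eq:no_dissip} of \Cref{as:contractivity} applied pointwise to $x = X_t^\eta$ and $y = Y_t^0$.

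Finally, I would apply Gr\"onwall's inequality (in its differential form) to obtain $|Z_t|^2 \leq \e^{2tB}|Z_0|^2$, and then take square roots to conclude $|X_t^\eta - Y_t^0| \leq \e^{tB}|X_0^\eta - Y_0^0|$ for all $t \geq 0$, almost surely. There is essentially no obstacle here: the crucial point, which deserves emphasis in the write-up, is that synchronous coupling eliminates the martingale part of $Z_t$, reducing the problem to a deterministic Gr\"onwall argument on each sample path; no It\^o correction or stochastic Gr\"onwall is needed. The assumption on $\X = \R^d$ versus $\T^d$ is also worth a brief comment, but the pointwise estimate carries over to the torus case with the natural lift.
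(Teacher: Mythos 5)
Your proposal is correct and follows essentially the same route as the paper: under synchronous coupling the noise cancels in the difference process, so differentiating $|X_t^\eta - Y_t^0|^2$, applying \eqref{eq:no_dissip}, and invoking Gr\"onwall's lemma gives the bound. The paper phrases the first step via It\^o's formula, but since the martingale part vanishes this is the same computation as your pathwise chain-rule argument.
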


\begin{proof}In order to bound the distance between the trajectories at time $t$ in terms of the initial distance, we first write, by It\^o's formula,
\begin{align*}
		d\bigl(|X_t^\eta - Y_t^0|^2\bigr) &= 2\ip{X_t^\eta - Y_t^0, dX_t^\eta - dY^0_t} \\
		&= 2\langle X_t^\eta - Y_t^0, b(X_t^\eta) - b(Y_t^0)\rangle \, dt \\
		&\leq 2B|X_0^\eta - Y_0^0|^2 \, dt.
	\end{align*}
Gr\"onwall's lemma then gives the claimed bound. \end{proof}

\subsubsection{Properties of initial conditions}
\label{subsubsec:properties_init_cond}
We consider a coupling measure $\mucoup(dx \, dy)$ with marginals $\psip(dx)$ and $\mu(dy)$. In order to ensure that the initial conditions are $\eta$ close, the coupling measure must be concentrated along the diagonal (more precisely, within $\eta$ distance from the diagonal), as illustrated in \cref{fig:coup_meas_mock}. A natural way of achieving this is to formulate~$X_0^\eta$ as a deterministic map of $Y_0^0$, i.e., to look for $\Phi_\eta \colon \mathcal{X} \to \mathcal{X}$ such that~$X_0^\eta = \Phi_\eta(Y_0^0)$, with~$\Phi_\eta$ close to the identity function. The function~$\Phi_\eta$ should be chosen such that
\begin{equation}
	\psip = \Phi_\eta \# \mu = (1+\eta S)\mu + \bigO(\eta^2),
	\label{eq:deterministic_map}
\end{equation}
where $\#$ denotes the image measure of $\mu$ by $\Phi_\eta$: For any bounded measurable test function~$\varphi\colon \X \to \R$,
\begin{equation}
    \notag
	\int_\mathcal{X} \varphi \, d\psip = \int_\mathcal{X} \varphi \circ \Phi_\eta \, d\mu.
\end{equation}
We look for a map $\Phi_\eta$ of the form
\begin{equation}
	\Phi_\eta(x) = x + \eta\varphi_1(x),
	\label{eq:Phi_map}
\end{equation}
where $\varphi_1$ is determined by \eqref{eq:deterministic_map}. It is in fact given by a solution to the partial differential equation (PDE) \eqref{eq:varphi1_PDE} below, as made precise in \cref{prop:gen_subtraction}.
Note that \eqref{eq:Phi_map} can be formulated as a map higher than first-order in $\eta$; see \cref{subsubsec:bias_analysis} for a discussion of this point.

The transient subtraction technique then amounts to evolving synchronously coupled equilibrium dynamics starting from initial conditions which are deterministically related:
\begin{equation}
\begin{cases}
\begin{aligned}
	dY_t^0 &= b(Y_t^0) \, dt + \sigma \, dW_t, \qquad Y_0^0 \sim \mu, \\
	dX_t^\eta &= b(X_t^\eta) \, dt + \sigma \, dW_t, \qquad X_0^\eta = \Phi_\eta(Y_0^0).
\end{aligned}
\end{cases}
\label{eq:sub_dyn}
\end{equation}
We next perform error analysis on the transient subtraction technique estimator \eqref{eq:ts_estimator} for~$(X_t^\eta)_{t\geq 0}$ and~$(Y_t^0)_{t\geq 0}$ given by~\eqref{eq:sub_dyn}.
 
\begin{figure}\centering
	\includegraphics[width=0.75\textwidth]{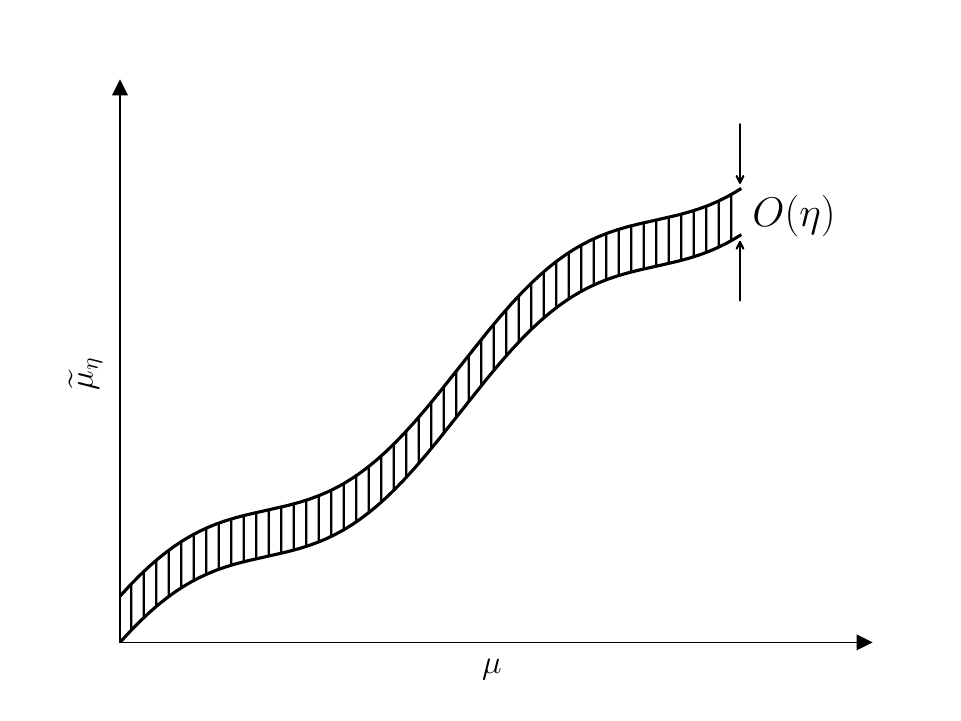}
	\caption{Illustration of coupling measure on initial conditions.}
	\label{fig:coup_meas_mock}
\end{figure}

\subsection{Numerical analysis of the transient subtraction method}
\label{subsec:num_anal_ts}
In this section, we perform error analysis on the transient subtraction estimator \eqref{eq:ts_estimator} realized with the dynamics~\eqref{eq:sub_dyn}. We start by making precise the functional setting and stating some estimates in \cref{subsubsec:lyapunov_setting}. We then make precise in \cref{subsubsec:bias_analysis} the bounds on the bias, and finally discuss its variance in \cref{subsubsec:variance_analysis}.

\subsubsection{Functional estimates}
\label{subsubsec:lyapunov_setting}
Consider a family of Lyapunov functions $(\mathcal{K}_n)_{n\in\N}$ with $\mathcal{K}_n \colon \mathcal{X} \to [1,+\infty)$ such that
\begin{equation}
\notag
	\forall n\in\N, \qquad \mathcal{K}_n \leq \mathcal{K}_{n+1}.
\end{equation}
The associated weighted $B^\infty$ spaces are
\begin{equation}
\notag
    B_n^\infty = \left\{\varphi \, \rm{measurable} \; \middle| \; \|\varphi\|_{B_n^\infty} := \sup _{x \in \mathcal{X}}\left| \frac{\varphi(x)}{\mathcal{K}_n(x)} \right| < +\infty\right\}.
\end{equation}
We next introduce the space $\S$ of smooth functions $\varphi$ belonging to the space $B^\infty_n$ for some $n$, and whose derivatives also belong to such spaces:
\begin{equation}
\notag
    \S = \left\{\varphi \in C^\infty(\mathcal{X}) \; \middle| \; \forall k \in \mathbb{N}^d, \; \exists n \in \mathbb{N}, \; \partial^k \varphi \in B_n^\infty\right\}.
\end{equation}
We finally define the subspace $\S_0$ of functions in $\S$ with average 0 with respect to $\mu$.

We make the following assumptions on the Lyapunov functions.

\begin{assumption}[{\bf Lyapunov estimates}]
\label{as:lyapunov}
There exist $n\in\N$ and~$C_n\in\R^+$ such that
\begin{equation}
	|x| \leq C_n\mathcal{K}_n(x).
	\label{eq:lyap_identity}
\end{equation}
Furthermore, for any~$n\in\N$,
\begin{equation}
	\norm{\mathcal{K}_n}_{L^1(\mu)} < +\infty.
	\label{eq:lyap_L1}
\end{equation}
Moreover, we assume that the Lyapunov functions are stable by products: for any~$n,n'\in\N$, there exist~$m\in\N$ and $C_{n,n'}\in\R^+$ such that
\begin{equation}
	\paren*{\mathcal{K}_n\mathcal{K}_{n'}}(x) \leq C'_{n,n'}\mathcal{K}_m(x).
	\label{eq:lyap_prod}
\end{equation}
We also assume stability by compositions: for any~$n,n'\in\N$ and~$\alpha^*\in\R^+$, there exist~$m\in\N$ and~$C_{n,n',\alpha^*}\in\R^+$ such that
\begin{equation}
\notag
	\forall\alpha \in [0,\alpha^*], \qquad \mathcal{K}_n(\alpha\mathcal{K}_{n'}(x)) \leq C_{n,n',\alpha^*}\mathcal{K}_m(x).
	\label{eq:lyap_comp}
\end{equation}
Lastly, we assume that $\mathcal{K}_n$ is nondecreasing in the following sense:
\begin{equation}
	\paren*{\forall i = 1,\dotsc,d, \quad |y_i| \leq |z_i|} \implies \mathcal{K}_n(y) \leq \mathcal{K}_n(z).
\end{equation}
\end{assumption}

A useful corollary of \cref{as:lyapunov}, which we will use in our estimates, is the following: for any~$f\in B^\infty_{n}$ and~$g = (g_1,\dotsc,g_d)$ with $g_i \in B^\infty_{n'}$, 
\begin{align}
\notag
	|f\circ g|(x) &\leq \|f\|_{B_{n}^\infty}\mathcal{K}_{n} \circ g(x) \\
	\notag
    &\leq \|f\|_{B_{n}^\infty}\mathcal{K}_{n}\paren*{\|g\|_{B^\infty_{n'}} \mathcal{K}_{n'}(x)} \\
    &\leq \norm{f}_{B^\infty_n}K_{n,n',\|g\|_{B^\infty_{n'}}}\mathcal{K}_{m}(x),
	\label{eq:lyap_composition_cor}
\end{align}
with $m$ depending on $n$ and $n'$.

A typical choice for $\mathcal{K}_n$ are polynomial Lyapunov functions of the form $\mathcal{K}_n(x) = 1+|x|^n$. This is a standard choice for Langevin dynamics; see \cite{mattingly2002,talay2002}. This choice satisfies \cref{as:lyapunov} when $\mu$ has moments of all orders, which is a mild requirement.

We also make an assumption on the convergence of the semigroup $\e^{t\L}$ in weighted~$B^\infty$ spaces. To this end, we introduce the subspace $B^\infty_n$ of functions with average~0 with respect to $\mu$:
\begin{equation}
\notag
    \Binfz = \left\{\varphi \in B^\infty_n \; \middle| \; \int_\mathcal{X} \varphi \, d\mu = 0\right\}.
\end{equation}

\begin{assumption}[{\bf Decay estimates on semigroup operator}]
  \label{as:decay_semigroup}
  For any $n\in\N$, there exist~$L_n \in \R^+$ and $\lambda_n>0$ such that
  \begin{equation}
    \forall \varphi\in \Binfz, \qquad \|\e^{t\L}\varphi\|_{B^\infty_n} \leq L_n\e^{-\lambda_n t}\norm{\varphi}_{B^\infty_n}.
    \label{eq:decay}
  \end{equation}
\end{assumption}

As a direct corollary of \cref{as:decay_semigroup}, the operator $\L$ is invertible on $\Binfz$, with
\begin{equation}
  \L^{-1} = -\int_0^{+\infty} \e^{t\L} \, dt.
  \label{eq:op_identity}
\end{equation}
Moreover, the following bound holds
\begin{equation}
\notag
  \norm{\L^{-1}}_{B^\infty_n} \leq \frac{L_n}{\lambda_n}.
\end{equation}
We refer for instance to \cite[Section 2]{lelievre2016} for a discussion on sufficient conditions for \cref{as:decay_semigroup} to hold (based on \cite{reybellet2006,hairer2011}), and for the proof of \eqref{eq:op_identity}.

\begin{remark}
Let us also emphasize that we consider systems for which the semigroup admits an exponentially decaying envelope, \emph{i.e.} for which the upper bound~\eqref{eq:decay} holds, with a decay rate possibly quite small. Correlation functions such as those appearing in the Green--Kubo formula~\eqref{eq:gk} or the TTCF formula~\eqref{eq:NL_response} are however typically unsigned for kinetic dynamics such as underdamped Langevin dynamics. Our estimate also allows for complex behaviors corresponding to the superposition of various exponential modes. It would be possible to extend our analysis to non-exponentially decaying correlations as long as they are integrable. We however refrain from doing so in order to keep the presentation simpler.   
\end{remark}

\subsubsection{Analysis of the bias}
\label{subsubsec:bias_analysis}
There are several sources of bias arising from the estimator~\eqref{eq:ts_estimator}, such as the time truncation and time discretization bias when considering numerical schemes to integrate the dynamics. Quantifying such biases is standard practice for estimators of this form. Additionally, there is a bias arising from the finiteness of $\eta$, which is the main result of this section. This is made precise in \cref{cor:gk_equiv}, which builds upon the estimates on the coupling measures provided by~\cref{prop:gen_subtraction} below. 

To state the result, we denote by $\mathcal{A}^*$ the adjoint of a closed operator $\mathcal{A}$ on $L^2(\mu)$: for any test functions~$\varphi, \phi \in C^\infty$ with compact support,
\begin{equation}
    \int_\mathcal{X} (\mathcal{A}\varphi)\phi \, d\mu = \int_\mathcal{X} \varphi(\mathcal{A}^*\phi) \, d\mu.
    \label{eq:Astar_adjoint}
\end{equation}

\begin{proposition}[{\bf Finite $\eta$ bias}]
\label{prop:gen_subtraction}
Suppose that \cref{as:lyapunov} holds true and that, for $S\in\S_0$, there exist solutions $\varphi_1 = (\varphi_{1,x_1},\dotsc,\varphi_{1,x_d}) \in (B^\infty_n)^d$ and $\varphi_2=(\varphi_{2,x_1},\dotsc,\varphi_{2,x_d}) \in (B^\infty_n)^d$ for some $n\in\N$ to the equations
\begin{equation}
	\nabla^*\varphi_1 = \sum_{i=1}^d \partial_{x_i}^* \varphi_{1,x_i} = S, \label{eq:varphi1_PDE}
\end{equation}
and
\begin{equation}
	\nabla^*\varphi_2 = -\frac{1}{2}\sum_{i,j=1}^d \partial_{x_i}^*\partial_{x_j}^* (\varphi_{1,x_i}\varphi_{1,x_j}) = -\frac{1}{2}(\nabla^*)^2 \colon \varphi_1\otimes \varphi_1.
	\label{eq:varphi2_PDE}
\end{equation}
Fix $\eta_*>0$, and $f\in \S$. Then, there exists $\mathcal{C}_{f,\eta_*} \in\R_+$ (which depends on~$f$ and~$\eta_*$) such that, for any~$|\eta| \leq \eta_*$,
\begin{equation}
	\qquad \left|\int_\mathcal{X} f\circ\Phi^\alpha_\eta \, d\mu - \int_\mathcal{X} f \, d\mu - \eta\int_\mathcal{X} f S \, d\mu\right| \leq \eta^{\alpha+1} \mathcal{C}_{f,\eta_*},
	\label{eq:prop_eta_bias_bound}
\end{equation}
with $\alpha=1$ or $\alpha=2$, and
\begin{equation}
\begin{cases}
\begin{aligned}
	&\Phi_\eta^1(x) = x + \eta\varphi_1(x), \\
	&\Phi_\eta^2(x) = x + \eta\varphi_1(x) + \eta^2\varphi_2(x).
\end{aligned}
\end{cases}
\label{eq:Phi_alpha}
\end{equation}
\end{proposition}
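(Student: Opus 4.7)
The strategy is a Taylor expansion in $\eta$ of $f \circ \Phi_\eta^\alpha$ pointwise in $x$, followed by integration against $\mu$, where integration by parts together with the PDEs~\eqref{eq:varphi1_PDE} and~\eqref{eq:varphi2_PDE} identifies the leading term $\eta \int f S \, d\mu$ and (for $\alpha = 2$) kills the next order term. The remainder is controlled using the Lyapunov structure of Assumption~\ref{as:lyapunov}.

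For $\alpha = 1$, I would use Taylor's formula with integral remainder to write, for each $x \in \mathcal{X}$,
\[
f \circ \Phi_\eta^1(x) = f(x) + \eta\, \nabla f(x) \cdot \varphi_1(x) + \eta^2 \int_0^1 (1-s)\, \nabla^2 f(x + s\eta \varphi_1(x)) : \varphi_1(x)^{\otimes 2} \, ds.
\]
Integrating against $\mu$ and applying the adjoint identity~\eqref{eq:Astar_adjoint} componentwise on the linear term gives
\[
\int \nabla f \cdot \varphi_1 \, d\mu = \sum_{i=1}^d \int f\, \partial_{x_i}^* \varphi_{1,x_i} \, d\mu = \int f\, (\nabla^* \varphi_1) \, d\mu = \int f S \, d\mu
\]
by~\eqref{eq:varphi1_PDE}, so that the leading order matches. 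For the remainder, since $f \in \mathcal{S}$ the components of $\nabla^2 f$ lie in some $B_n^\infty$; combining this with $\varphi_1 \in (B^\infty_n)^d$ and the composition estimate~\eqref{eq:lyap_composition_cor} (controlling $\nabla^2 f$ evaluated at the perturbed point $x + s\eta\varphi_1(x)$), together with the product stability~\eqref{eq:lyap_prod}, bounds the integrand by $C(1+|\eta_*|)^{\cdot}\,\mathcal{K}_m(x)$ for some $m \in \mathbb{N}$, which is $L^1(\mu)$ by~\eqref{eq:lyap_L1}. This yields the $\eta^2$ bound.

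For $\alpha = 2$, I would Taylor-expand one order further. Setting $h(\eta) = f(\Phi_\eta^2(x))$ and computing $h(0), h'(0), h''(0)$ yields
\[
f \circ \Phi_\eta^2(x) = f(x) + \eta\, \nabla f \cdot \varphi_1(x) + \eta^2 \Bigl[ \nabla f(x) \cdot \varphi_2(x) + \tfrac{1}{2} \nabla^2 f(x) : \varphi_1(x)^{\otimes 2} \Bigr] + \eta^3 R_\eta(x),
\]
with $R_\eta$ an explicit integral remainder involving $\nabla^3 f, \nabla^2 f, \nabla f$ evaluated along the path $x + s\eta\varphi_1(x) + s\eta^2\varphi_2(x)$ and polynomial combinations of $\varphi_1, \varphi_2$. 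Integrating the $\eta^2$-bracket against $\mu$ and integrating by parts twice as above gives
\[
\int f \Bigl[\nabla^* \varphi_2 + \tfrac{1}{2} (\nabla^*)^2 : (\varphi_1 \otimes \varphi_1)\Bigr] \, d\mu = 0
\]
by~\eqref{eq:varphi2_PDE}, so the $\eta^2$ contribution vanishes and the first nonzero correction is $\eta^3 \int R_\eta \, d\mu$, bounded as in the previous case.

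\paragraph{Main obstacle} The routine part is the Taylor expansion and the integration by parts; the only technical care required is the uniform-in-$\eta \in [-\eta_*, \eta_*]$ control of the Taylor remainder. This is where Assumption~\ref{as:lyapunov} is used nontrivially: one must show that the composition $x \mapsto \partial^{k}f\bigl(x + s\eta\varphi_1(x) + s\eta^2 \varphi_2(x)\bigr)$ is dominated by some $\mathcal{K}_m(x)$ independently of $s \in [0,1]$ and $|\eta| \leq \eta_*$. The nondecreasing property together with the composition estimate~\eqref{eq:lyap_composition_cor} applied to the displacement $|s\eta\varphi_1(x) + s\eta^2\varphi_2(x)| \leq \eta_*(\|\varphi_1\|_{B_n^\infty} + \eta_* \|\varphi_2\|_{B_n^\infty})\mathcal{K}_n(x)$, combined with product stability~\eqref{eq:lyap_prod} to accommodate the factors of $\varphi_1, \varphi_2, \partial^k f$ in the remainder, produces a Lyapunov-dominated bound whose $\mu$-integral gives the constant $\mathcal{C}_{f,\eta_*}$.
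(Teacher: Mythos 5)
Your proposal is correct and follows essentially the same route as the paper: a Taylor expansion of $f\circ\Phi_\eta^\alpha$, identification of the first- and second-order terms via $L^2(\mu)$-adjoints and the PDEs \eqref{eq:varphi1_PDE}--\eqref{eq:varphi2_PDE}, and uniform control of the remainder through the composition and product stability properties of \cref{as:lyapunov} together with \eqref{eq:lyap_L1}. The only cosmetic differences are that you use the integral form of the Taylor remainder where the paper uses a Lagrange-type interpolation point $\Theta_\eta(x)$, and that you treat $\alpha=1$ directly rather than deducing it from the $\alpha=2$ case.
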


\begin{remark}
The smoothness condition on $f$ can be weakened, as it would be enough to have derivatives of $f$ up to order 3 in $B^\infty_m$. In order to simplify the presentation of the result, however, we suppose~$f\in \S$ here.
\end{remark}

This result states that the finite $\eta$ bias in the linear response is of order~$\bigO(\eta^{\alpha})$ for a map $\Phi_\eta$ which includes well-chosen terms up to order $\bigO(\eta^\alpha)$. It is of course possible to construct higher-order corrections in order to further decrease the bias. The associated PDEs for the corresponding $\varphi_i$ terms, however, become increasingly cumbersome to solve, rendering it an impractical approach. In any case, a second-order map leads to an estimator with $\bigO(\eta^2)$ bias, which is sufficiently small in general.

\begin{remark}[{\bf Well-posedness of PDEs}]
Although \eqref{eq:varphi1_PDE} might look difficult to solve, one can show that it admits gradient solutions of the form $\varphi_1 = \nabla\psi$ under some conditions on $\mu$. Indeed, \eqref{eq:varphi1_PDE} can then be written as
\begin{equation}
\notag
	\nabla^*\nabla \psi = S, 
	\end{equation}
which has a unique solution when $\nabla^*\nabla$ has a spectral gap when considered as an operator on~$L^2(\mu)$ (implied by $\mu$ satisfying a Poincar\'e inequality). Note that the solutions~$\varphi_1$ and~$\varphi_2$ are defined up to an element of the kernel of $\nabla^*$, i.e., if $\nabla\psi$ is a solution to~\eqref{eq:varphi1_PDE}, then~$\varphi_1 = \nabla\psi + g$ with $\nabla^*g = 0$ is also an admissible solution. \end{remark}

\begin{proof}[Proof of \cref{prop:gen_subtraction}]
It suffices to prove the result for $\alpha=2$, from which the result for $\alpha=1$ can be trivially deduced. By a Taylor expansion of $f(\Phi_\eta(x))$,
\begin{align*}
		f(\Phi_\eta(x)) = f(x) &+ \eta \nabla f(x)^\t(\varphi_1(x) + \eta\varphi_2(x)) \\
		&+ \frac{\eta^2}{2}(\varphi_1(x) + \eta\varphi_2(x))^\t \nabla^2 f(x)(\varphi_1(x) + \eta\varphi_2(x)) \\
		&+ \frac{\eta^3}{6} \nabla^3f\left(\Theta_\eta(x)\right)\cdot (\varphi_1(x) + \eta\varphi_2(x))^{\otimes 3},
\end{align*}
where $\nabla^3 f$ denotes the third-order derivative tensor (and $\nabla^2 f$ denotes the Hessian), and~$\Theta_\eta(x)$ interpolates between~$x$ and~$\Phi_\eta(x)$:
\begin{equation}
\notag
	\Theta_\eta(x) = (1-\theta_\eta(x))x + \theta_\eta(x)\Phi_\eta(x), \qquad \theta_\eta(x) \in [0,1].
\end{equation}
Integrating the Taylor expansion above yields
\begin{equation}
\label{eq:prop_rhs_terms}
\begin{split}
	\int_\mathcal{X} f\circ\Phi_\eta \, d\mu &= \int_\mathcal{X} f \, d\mu + \eta \int_\mathcal{X}\nabla f^\t \varphi_1 \, d\mu \\
 &\qquad + \eta^2\int_\mathcal{X} \paren*{\varphi_2^\t \nabla f+\frac{1}{2}\varphi_1^\t(\nabla^2 f)\varphi_1} \, d\mu + \eta^3\mathcal{R}_{3,\eta},
 \end{split}
\end{equation}
with $\mathcal{R}_{3,\eta}$ given by 
\begin{equation}
\mathcal{R}_{3,\eta} = \int_\mathcal{X} \bkt*{\frac{1}{6}\nabla^3f\left(\Theta_\eta\right)\cdot (\varphi_1 + \eta\varphi_2)^{\otimes 3} + \varphi_1^\t (\nabla^2 f)\varphi_2 + \frac{\eta}{2}\varphi_2^\t (\nabla^2 f)\varphi_2} \, d\mu.
	\label{eq:R3eta_bias}
\end{equation}
We next show that the remainder term $\mathcal{R}_{3,\eta}$ is uniformly bounded. We start with the first right-hand side integrand term in \eqref{eq:R3eta_bias}. Since $\varphi_1,\varphi_2$ have components in $B^\infty_n$ for some $n\in\N$, we deduce that so does $\Phi_\eta\in B^\infty_n$ (since the identity is in $B^\infty_n$ upon possibly increasing $n$ in view of \eqref{eq:lyap_identity}), and thus also~$\Theta_\eta$. Therefore, since $f\in\S$ and in view of \eqref{eq:lyap_comp} and \eqref{eq:lyap_composition_cor}, there exist $m,n'\in\N$ and $C\in\R^+$ such that, for all~$x\in\mathcal{X}$ and all $\eta\in [-\eta_*,\eta_*]$,
\begin{align}
\notag
	\abs*{\nabla^3f\paren*{\Theta_\eta(x)}} \leq C\smash{\sum_{i,j,k=1}^d}\norm*{\partial_{x_i,x_j,x_k}^3 f}_{B_m^\infty}\mathcal{K}_{n'}(x),
\end{align}
so that 
\begin{align}
\notag
	&\left|\int_\mathcal{X} \nabla^3f\paren*{\Theta_\eta(x)}\cdot (\varphi_1(x) + \eta\varphi_2(x))^{\otimes 3} \, d\mu\right| \\
	\label{eq:rem_bounded}
&\qquad \leq C\smash{\sum_{i,j,k=1}^d}\norm*{\partial_{x_i,x_j,x_k}^3 f}_{B_m^\infty}\paren*{\norm{\varphi_1}_{B_n^\infty} + \eta\norm{\varphi_2}_{B_n^\infty}}^3\int_\mathcal{X} \mathcal{K}_n^3\mathcal{K}_{n'} \, d\mu,
\end{align}
where \eqref{eq:lyap_comp} ensures uniformity in $\eta$ for the bounds above. By \cref{as:lyapunov}, there exist~$C\in \R^+$ and~$\ell\in\N$ such that $\mathcal{K}_n^3\mathcal{K}_m \leq C\mathcal{K}_\ell$. Since $\mathcal{K}_\ell \in L^1(\mu)$ by \eqref{eq:lyap_L1}, we conclude that~\eqref{eq:rem_bounded} is uniformly bounded for $|\eta|\leq \eta_*$. A similar computation shows that the remaining two integrand terms in \eqref{eq:R3eta_bias} are also uniformly bounded. 

To prove~\eqref{eq:prop_eta_bias_bound}, it remains to show that the first and second-order terms in~$\eta$ in~\eqref{eq:prop_eta_bias_bound} and~\eqref{eq:prop_rhs_terms} coincide, which is by design of~$\varphi_1$ and~$\varphi_2$. Indeed, by taking $L^2(\mu)$-adjoints and in view of condition~\eqref{eq:varphi1_PDE} on~$\varphi_1$,
\begin{equation}
\notag
	\int_\mathcal{X} \nabla f^\t \varphi_1 \, d\mu = \int_\mathcal{X} f\nabla^* \varphi_1 \, d\mu = \int_\mathcal{X} f S \, d\mu.
\end{equation}
Similarly, applying $L^2(\mu)$-adjoints to the second-order $\eta$ terms in \eqref{eq:prop_rhs_terms} yields
\begin{align}
\notag
	\int_\mathcal{X} \paren*{\varphi_2^\t \nabla f + \frac{1}{2}\varphi_1^\t (\nabla^2f) \varphi_1} \, d\mu = \int_\mathcal{X} \paren*{\nabla^*\varphi_2 + \frac{1}{2}(\nabla^*)^2 \colon \varphi_1\otimes \varphi_1}f \, d\mu,
\end{align}
which vanishes when $\varphi_2$ satisfies \eqref{eq:varphi2_PDE}. This allows us to conclude the proof.
\end{proof}

The above result allows us to quantify the bias, as made precise in the corollary below. Before stating it, we require an additional assumption.

\begin{assumption}
\label{as:L_stability}
The generator $\L$ is invertible on $\S_0$. In other words, for any $\phi\in\S_0$, there exists a unique solution $\Psi \in \S_0$ to the Poisson equation $-\L\Psi = \phi$.
\end{assumption}

\cref{as:L_stability} can be shown to hold for overdamped and underdamped Langevin dynamics under certain conditions on the potential $V$ \cite{talay2002,kopec2014,kopec2015}, and is a standard result in the literature.

Applying \cref{prop:gen_subtraction} together with the decay estimates from \cref{as:decay_semigroup}, as well as \cref{as:L_stability}, to the transient subtraction estimator \eqref{eq:ts_estimator} yields the following result on the bias of the estimator \eqref{eq:ts_estimator}.

\begin{corollary}
\label{cor:gk_equiv}
Under the assumptions of \cref{prop:gen_subtraction} as well as~\cref{as:decay_semigroup,as:L_stability}, there exists~$C\in\R^+$ such that, for any $T>0$ and $\eta \in [-\eta_*, \eta_*] \setminus \{0\}$,
\begin{equation}
\notag
	\left|\E\paren*{\aTSest} - \rho\right| \leq C\paren*{\eta^\alpha + \frac{\e^{-\lambda T}}{\eta}},
\end{equation}
where $\aTSest$ is defined as \eqref{eq:ts_estimator} with the dynamics \eqref{eq:sub_dyn} and $\Phi_\eta^\alpha$ given by \eqref{eq:Phi_alpha}.
\end{corollary}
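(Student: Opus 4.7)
The plan is to reduce $\E(\aTSest)$ to an integral of a single fixed function against the perturbed initial measure $\mu_\eta^\alpha := \Phi_\eta^\alpha \# \mu$, and then invoke \cref{prop:gen_subtraction}.

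First I would observe that since $Y_0^0 \sim \mu$ is invariant under the reference dynamics and $R$ has mean zero with respect to $\mu$, we have $\E[R(Y_t^0)] = 0$ for every $t \geq 0$. This kills the contribution of the control variate in expectation and yields
$$\E\paren*{\aTSest} = \frac{1}{\eta}\int_0^T \int_\X (\e^{t\L}R)\, d\mu_\eta^\alpha \, dt.$$

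Next I would split the time integral as $\int_0^{+\infty} - \int_T^{+\infty}$. For the infinite piece, \cref{as:L_stability} ensures that $\Psi := -\L^{-1}R \in \S_0$, and identity \eqref{eq:op_identity} together with Fubini gives
$$\int_0^{+\infty} \int_\X (\e^{t\L}R)\, d\mu_\eta^\alpha \, dt = \int_\X \Psi \, d\mu_\eta^\alpha.$$
I would then apply \cref{prop:gen_subtraction} to $f = \Psi \in \S$, exploiting that $\int \Psi\, d\mu = 0$ (so the zeroth-order term vanishes) and $\int \Psi S \, d\mu = \rho$ (the Green--Kubo formula \eqref{eq:gk} rewritten via~\eqref{eq:op_identity}). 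Dividing by $\eta$ then produces $\rho + \bigO(\eta^\alpha)$.

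For the tail term, I would bound $\abs*{\int (\e^{t\L}R)\, d\mu_\eta^\alpha}$ by $\norm{\e^{t\L}R}_{B^\infty_n}\cdot \int \mathcal{K}_n\, d\mu_\eta^\alpha$. \cref{as:decay_semigroup} supplies the exponential decay $\norm{\e^{t\L}R}_{B^\infty_n} \leq L_n \e^{-\lambda t}\norm{R}_{B^\infty_n}$, while the composition and monotonicity properties in \cref{as:lyapunov} applied to $\Phi_\eta^\alpha(x) = x + \eta\varphi_1(x) + \eta^2\varphi_2(x)$ give a bound on $\int \mathcal{K}_n\, d\mu_\eta^\alpha$ that is uniform for $\abs{\eta}\leq \eta_*$. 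Integrating the resulting $C\e^{-\lambda t}$ from $T$ to $+\infty$ and dividing by $\eta$ provides exactly the $\e^{-\lambda T}/\eta$ contribution. Adding the two pieces concludes the proof.

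The main obstacle is the uniform control of the moments $\int \mathcal{K}_n\, d\mu_\eta^\alpha$ for $\eta \in [-\eta_*,\eta_*]\setminus\{0\}$, which requires carefully chaining the stability-under-composition and monotonicity properties of the Lyapunov family from \cref{as:lyapunov}; one must exhibit an index $m$ such that $\mathcal{K}_n \circ \Phi_\eta^\alpha \leq C\mathcal{K}_m$ uniformly in $\eta$, so that the tail constant is truly $\eta$-independent. Once this technical point is handled, the rest of the argument is a direct combination of \cref{prop:gen_subtraction} with the decay estimate \eqref{eq:decay} and the operator identity \eqref{eq:op_identity}.
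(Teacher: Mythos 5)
Your proposal is correct and follows essentially the same route as the paper's proof: kill the control variate in expectation, split the time integral at $T$, apply \cref{prop:gen_subtraction} to $f = -\L^{-1}R$ for the $\eta^\alpha$ term, and use the semigroup decay together with the Lyapunov composition estimates to get the $\e^{-\lambda T}/\eta$ tail uniformly in $\eta$. The only cosmetic difference is that you phrase the argument via the pushforward measure $\Phi_\eta^\alpha \# \mu$ rather than writing the composition $(\e^{t\L}R)\circ\Phi_\eta$ under $\mu$, which is equivalent.
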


This result, obtained as a direct consequence of \cref{prop:gen_subtraction}, shows that the bias of the transient (subtraction) technique has two distinct contributions: an exponentially decaying bias term arising from the time truncation, as in the Green--Kubo method (however magnified by a $\eta^{-1}$ factor); and a bias of order~$\eta^\alpha$ due to the finiteness of $\eta$, corresponding to deviations from the linear regime.

\begin{remark}
	Although the constant $C$ does not depend on $T$ (which suggests taking $T$ as large as possible to minimize the truncation bias contribution), the variance depends on $T$ (see \cref{prop:var_ts} below). This calls for equilibrating between the two in order to have the smallest overall error.
\end{remark}

\begin{proof}[Proof of \cref{cor:gk_equiv}]
	Fix $\eta \in [-\eta_*, \eta_*] \setminus \{0\}$. Since $R$ has average 0 with respect to $\mu$, it holds that
\begin{align}
\notag
\left|\E\paren*{\aTSest} - \rho\right| &= \frac{1}{\eta}\Biggl|\int_0^T \E\bigl(R(X^\eta_t)\bigr) \, dt - \eta\rho\Biggr| \\
\notag
&= \frac{1}{\eta}\Biggl|\int_0^{+\infty} \E\bigl(R(X^\eta_t)\bigr) \, dt - \int_T^{+\infty} \E\bigl(R(X^\eta_t)\bigr) \, dt - \eta\rho\Biggr| \\
&\leq \frac{1}{\eta}\abs*{\int_0^{+\infty} \E\bigl(R(X^\eta_t)\bigr) \, dt - \eta\rho} + \frac{1}{\eta}\abs*{\int_T^{+\infty} \E\bigl(R(X^\eta_t)\bigr) \, dt}.
\label{eq:bias_prop_eq1}
\end{align}
We first consider the second term in \eqref{eq:bias_prop_eq1}. By the semigroup definition of the expectation and the fact that~$\psip = \Phi_\eta\#\mu$, it holds that
\begin{align}
\notag
		\abs*{\int_T^{+\infty} \E\bigl(R(X^\eta_t)\bigr) \, dt} &= \abs*{\int_T^{+\infty} \int_\mathcal{X} \bigl(\e^{t\L} R\bigr) \circ \Phi_\eta \, d\mu \, dt}.
\end{align}
To bound the above quantity, we apply the semigroup decay estimate~\eqref{eq:decay} and use~\eqref{eq:lyap_composition_cor}: 
\begin{align*}
		\abs*{\int_T^{+\infty} \E\bigl(R(X^\eta_t)\bigr) \, dt} &\leq \int_T^{+\infty} \int_\mathcal{X} \abs*{\bigl(\e^{t\L} R\bigr) \circ \Phi_\eta} \, d\mu \, dt \\
		&\leq \int_T^{+\infty} \int_\mathcal{X} \norm*{\e^{t\L} R}_{B^\infty_n} \mathcal{K}_n \circ \Phi_\eta \, d\mu \, dt \\
		&\leq \int_T^{+\infty} \norm*{\e^{t\L} R}_{B^\infty_n} \int_\mathcal{X} \mathcal{K}_n\paren*{\norm{\Phi_\eta}_{B^\infty_{n'}} \mathcal{K}_{n'}} \, d\mu \, dt \\
		&\leq \norm{R}_{B^\infty_n}\int_T^{+\infty} L_n C_{n,n',\eta^*} \e^{-\lambda_n t} \paren*{\int_\mathcal{X} \mathcal{K}_m \, d\mu} \, dt \\
		&\leq \norm{R}_{B^\infty_n}\widetilde{C}_{m,n,n',\eta^*}\int_T^{+\infty} \e^{-\lambda_n t} \, dt \\
  &= \frac{\norm{R}_{B^\infty_n}\widetilde{C}_{m,n,n',\eta^*}\e^{-\lambda_n T}}{\lambda_n}. \end{align*}
We now consider the first term on the right-hand side of \eqref{eq:bias_prop_eq1}. 
Once again applying the semigroup definition of the expectation as well as the operator identity~\eqref{eq:op_identity}, it holds that
\begin{align}
		\int_0^{+\infty} \E\bigl(R(X^\eta_t)\bigr) \, dt = \int_\mathcal{X}\int_0^{+\infty} \bigl(\e^{t\L}R\bigr)\circ \Phi_\eta \, dt \, d\mu
= \int_\mathcal{X} \bigl(-\L^{-1}R\bigr)\circ \Phi_\eta \, d\mu.
		\label{eq:cor_Tint_to_LinvR}
	\end{align}
Writing the expectation in terms of the semigroup, and in view of the operator identity~\eqref{eq:op_identity}, we write the Green--Kubo formula~\eqref{eq:gk} as
\begin{align}
    \notag
		\rho = \int_0^{+\infty} \E_\mu\bigl(R(Y_t^0)S(Y_0^0)\bigr) \, dt = \int_\mathcal{X} (-\L^{-1}R)S \, d\mu.
	\end{align}
Thus, by \cref{prop:gen_subtraction} with $f = -\L^{-1}R \in \S_0$, it follows that
\begin{equation}
		\abs*{\frac{1}{\eta}\int_0^{+\infty} \E\bigl(R(X^\eta_t)\bigr) \, dt - \rho} \leq \eta^{\alpha} \mathcal{C}_{-\L^{-1}R,\eta_*}.
		\label{eq:cor_eta_bias}
	\end{equation}
This allows us to obtain the desired result.
\end{proof}

\subsubsection{Analysis of the variance}
\label{subsubsec:variance_analysis}
We state in this section some results on the scaling of the variance of the estimator~\eqref{eq:ts_estimator} used with the dynamics \eqref{eq:sub_dyn}.

\begin{proposition}[{\bf Variance of transient subtraction estimator}]
	\label{prop:var_ts}
	Suppose that \cref{as:contractivity} holds and that $R$ is globally Lipschitz with Lipschitz constant $\RLip$. Then, for any $T>0$,
\begin{equation}
		\Var\paren*{\aTSest} \leq \frac{\RLip^2}{K}\frac{\E\bkt*{|X_0^\eta - Y_0^0|^2}}{\eta^2}\paren*{\int_0^T \e^{tB} \, dt}^2.
\label{eq:var_prop_bound}
	\end{equation}
\end{proposition}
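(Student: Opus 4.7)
The plan is to bound the variance by a second moment and then control pointwise the integrand using the Lipschitz property of $R$ together with the pathwise decoupling estimate of \cref{lemma:decoupling_times}. Concretely, since the $K$ copies $(X_t^{\eta,k},Y_t^{0,k})_{t\ge 0}$ for $1 \leq k \leq K$ are independent and identically distributed, the variance of the empirical average factorizes and I gain a prefactor $1/K$, so it suffices to estimate the variance of a single realization
\[
Z := \frac{1}{\eta} \int_0^T \bigl[R(X_t^\eta) - R(Y_t^0)\bigr] \, dt.
\]

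I would then use the crude (but here sufficient) bound $\Var(Z) \leq \E[Z^2]$. The advantage of the synchronous coupling is precisely that the integrand $R(X_t^\eta) - R(Y_t^0)$ is small pathwise whenever the two trajectories are close. Using the global Lipschitz assumption on $R$, I get $|R(X_t^\eta) - R(Y_t^0)| \leq \RLip\, |X_t^\eta - Y_t^0|$, and then \cref{lemma:decoupling_times} (which applies thanks to \cref{as:contractivity}) gives the almost sure bound $|X_t^\eta - Y_t^0| \leq \e^{tB}|X_0^\eta - Y_0^0|$. Combining these yields, almost surely,
\[
\abs*{\int_0^T \bigl[R(X_t^\eta) - R(Y_t^0)\bigr]\, dt} \leq \RLip\, |X_0^\eta - Y_0^0| \int_0^T \e^{tB}\, dt.
\]

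Squaring, dividing by $\eta^2$, taking expectations, and inserting the $1/K$ factor from the i.i.d.\ average then gives exactly the right-hand side of~\eqref{eq:var_prop_bound}. No step is really an obstacle here: the inequality $\Var \leq \E[\,\cdot\,^2]$ is lossy only up to the (subtracted) mean, which is fine for an upper bound, and the rest is a deterministic pathwise estimate followed by expectation. The only mildly delicate point worth mentioning is that the bound on $|X_t^\eta - Y_t^0|$ is uniform in $t \in [0,T]$ and purely pathwise, so one can pull $|X_0^\eta - Y_0^0|$ out of the time integral before taking expectation, which is what produces the clean product structure $\E[|X_0^\eta - Y_0^0|^2]\bigl(\int_0^T \e^{tB}\, dt\bigr)^2$ on the right-hand side.
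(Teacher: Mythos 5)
Your proposal is correct and follows essentially the same route as the paper's proof: reduce to $K=1$ by independence, bound the variance by the second moment, apply the Lipschitz bound on $R$ together with \cref{lemma:decoupling_times}, and factor out $|X_0^\eta - Y_0^0|$ from the time integral before taking expectations. No meaningful differences to report.
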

This result suggests that the variance grows at most exponentially fast in time, which is the case when $B>0$. For dissipative drifts, i.e., $B<0$, the variance is uniformly bounded in time by $B^{-2}$. Lastly, for $B=0$ the variance grows linearly in $T$.

Note that the variance is uniformly bounded in $\eta$. In particular, since the functions~$\varphi_1,\varphi_2$ (defined in~\eqref{eq:varphi1_PDE} and~\eqref{eq:varphi2_PDE}, respectively, and assumed to be in $B^\infty_n$ for some $n\in\N$) belong to~$L^2(\mu)$ by~\eqref{eq:lyap_L1}--\eqref{eq:lyap_prod}, it holds that
\begin{equation}
	\frac{\E\bkt*{\abs{X_0^\eta - Y_0^0}^2}}{\eta^2} = 
	\begin{cases}
		\norm{\varphi_1}^2_{L^2(\mu)} &\text{if }\alpha = 1, \\
		\norm{\varphi_1 + \eta\varphi_2}^2_{L^2(\mu)} &\text{if }\alpha = 2.
	\end{cases}
\label{eq:delta0_var_statement}
\end{equation}
Plugging \eqref{eq:delta0_var_statement} into \eqref{eq:var_prop_bound} immediately implies a bound on the variance uniform in the perturbation parameter $|\eta| \leq \eta_*$.

For the result stated in \cref{prop:var_ts} , we do not consider the asymptotic variance as for the transient estimator in \cref{subsec:transient}, as we cannot observe the $T\to+\infty$ limit due to the couplings we consider. As discussed in \cref{subsubsec:synchronous_coup}, the dynamics will decouple at large times, leading to a substantial increase in variance. We thus need to truncate the integration time $T$ and provide a nonasymptotic bound.

\begin{remark}
Variance reduction is obtainable for synchronous coupling even without strong conditions on the drift of~\eqref{eq:general_SDE}. In particular, when we have no dissipativity (i.e., $B>0$), the subtraction technique is better than the transient method discussed in \cref{subsec:transient} provided that~$\e^{2BT} \ll T/\eta^2$, i.e., $T\ll -\log(\eta)/B$. As one would typically consider $\eta\ll 1$, this suggests that the subtraction technique should therefore be preferred. 
\end{remark}

\begin{proof}[Proof of \cref{prop:var_ts}]
It suffices to consider the estimator \eqref{eq:ts_estimator} for $K=1$, since $\Var(\aTSest) = K^{-1}\Var(\widehat{\rho}^{T,1,\eta,\alpha}_{\rm sub})$. To simplify the notation, we write $\asTSest$ instead of $\widehat{\rho}^{T,1,\eta,\alpha}_{\rm sub}$:
\begin{equation}
\notag
		\asTSest = \frac{1}{\eta}\int_0^T (R(X_t^\eta) - R(Y_t^0)) \, dt.
	\end{equation}
Since $R$ is Lipschitz, and using \cref{lemma:decoupling_times} to bound the coupling distance $|X_t^\eta - Y_t^0|$ in terms of the initial distance $|X_0^\eta - Y_0^0|$, we have
\begin{align}
\notag
		\abs*{\asTSest} \leq \RLip \int_0^T \frac{|X_t^\eta - Y_t^0|}{\eta} \, dt \leq \RLip \int_0^T \frac{\e^{tB}|X_0^\eta - Y_0^0|}{\eta} \, dt.
\end{align}
We next bound the variance as
\begin{align}
\notag
		\Var\paren*{\asTSest} 
\leq \E\bkt*{\abs*{\asTSest}^2} = \RLip^2\E\bkt*{\abs*{\int_0^T \frac{\e^{tB}|X_0^\eta - Y_0^0|}{\eta} \, dt}^2},
	\end{align}
which leads to the desired result.
\end{proof}

\subsection{Elements on the error analysis for TTCF}
\label{sec:error_TTCF}

We make precise here the performance of the TTCF estimator~\eqref{eq:TTCF_estimator}, as discussed for instance around~\cite[Eqs.~(16) and~(17)]{maffioli2024}. We can in fact rely on the analysis performed in~\cite{pavliotis2024} and~\cite[Propositions~2.2 and~2.3]{pavliotis2024}, both to quantify the bias and the variance.

More precisely, when decay estimates on the semigroup similar to those of Assumption~\ref{as:decay_semigroup} hold for the semigroup~$\e^{t \G_\eta}$ uniformly over~$\eta$ in a compact set, the bias in the nonlinear response is exponentially small with respect to the time~$t$, in view of the equality
\begin{equation}
  \label{eq:bias_TTCF}
  \E(\TestTTCF) - \frac{\E_\eta(R)-\E_0(R)}{\eta} = \int_T^{+\infty} \left(\e^{t\G_\eta} R \right) S \, d\mu.
\end{equation}

For the variance, we note that the estimator is similar to a standard Green--Kubo estimator, except that the dynamics which is considered is the nonequilibrium dynamics with forcing magnitude~$\eta$, and the initial conditions are not distributed according to the stationary distribution. Consider the estimator obtained by recentering~$R$ to its steady-state value:
\[
\TestTTCFcenter = \frac1K \sum_{k=1}^K \int_0^t \left[R(Y_s^{\eta,k}) - \E_\eta(R)\right] S(Y_0^{\eta,k}) \, ds.
\]
In order to quantify the variance of this estimator, we introduce the Poisson equation~$-\G_\eta \mathcal{R}_\eta = R - \E_\eta(R)$ (which admits a unique solution when decay estimates estimates on the semigroup~$\e^{t\G_\eta}$ hold), and perform It\^o calculus on~$\mathcal{R}_\eta(Y_y^{\eta,k})$ to write
\[
\TestTTCFcenter = S(Y_0^{\eta,k})\left[\mathcal{R}_\eta(Y_0^{\eta,k}) - \mathcal{R}_\eta(Y_t^{\eta,k}) + \int_0^t \nabla \mathcal{R}_\eta(Y_s^{\eta,k})^\top \sigma \, dW_s \right].
\]
The dominant term on the right hand side of the previous equality is the time integral. By a straightforward adaptation of the proof of~\cite[Proposition~2.3]{pavliotis2024}, one can then show that
\begin{equation}
  \label{eq:asymptotic_variance_TTCF}
  \lim_{t \to +\infty} \frac{\Var(\TestTTCFcenter)}{t} = \|S\|^2_{L^2(\mu)} \left\|\sigma^\top \nabla\mathcal{R}_\eta\right\|_{L^2(\mu_\eta)}.
\end{equation}
This result shows that the variance of the recentred TTCF estimator is uniformly bounded for~$\eta$ in compact sets, and is of order~$t$. If the estimator is not recentered, then~$\TestTTCF = \TestTTCFcenter + t \E_\eta(R) S(Y_0^\eta)$ has a variance of order~$\mathrm{O}(t^2)$ due to the term which is linear in~$t$. This suggests to rely in practice on recentered estimators such as~$\TestTTCFcenter$, with an empirical estimation of the recentering (similarly to what is done for sensitivity estimators such as those considered in~\cite{plechac2021} for instance).

\begin{remark}
  \label{rmk:NEMD_vs_TTCF}
  The variance estimate~\eqref{eq:asymptotic_variance_TTCF} can be compared to the one obtained for the NEMD estimator of the transport coefficient
  \[
  \TestNEMD = \frac{1}{\eta t} \int_0^t R(\widehat{Y}_s^\eta) \, ds, \qquad \widehat{Y}_0^\eta \sim \mu_\eta,
  \]
  which is of order~$1/(\eta^2 t)$ (see for instance the discussion in~\cite[Section~3.1]{stoltz2024} for further precisions). In order to have a good NEMD estimator, one needs the square root of the variance to be smaller than~$\E(\TestNEMD) = \mathrm{O}(\eta)$. This is measured by the signal to noise ratio, which is the ratio of~$\E(\TestNEMD)$ and the square root of the variance, and is therefore of order~$\eta^2 \sqrt{t}$. The signal to noise ratio for the recentered TTCF estimator is in contrast of order~$1/\sqrt{t}$. In fact, in order to make comparisons at fixed computational cost, one should compare one long NEMD trajectory of length~$Kt$ and~$K$ independent realizations of length~$t$ for the recentered TTCF, so that the corresponding signal to noise ratios are respectively~$\eta^2 \sqrt{Kt}$ and~$1/\sqrt{Kt}$. The comparison between these two quantities allows to decide when NEMD is more efficient than TTCF and conversely. TTCF is better for~$\eta$ small, while NEMD methods are better for~$\eta$ large, the threshold value scaling as~$1/\sqrt{Kt}$.
\end{remark}

\section{Application to Langevin dynamics}
\label{sec:numerical_lang}
To illustrate the theoretical results obtained in \cref{sec:transient}, we apply the transient subtraction technique to compute the mobility and shear viscosity for a Lennard--Jones fluid, and to a low-dimensional example, with the Langevin dynamics \eqref{eq:lang_dynamics} serving as the underlying dynamics for all cases. We present our numerical results in three parts:
\begin{itemize}
	\item In \cref{subsec:num_lang}, we formulate the transient subtraction technique for Langevin dynamics by making precise $\varphi_1$ and $\varphi_2$ for the conjugate responses $S$ of interest.
\item In \cref{subsec:num_1D}, we numerically illustrate the finite $\eta$ bias results from \cref{cor:gk_equiv}, which apply~\eqref{eq:prop_eta_bias_bound} to the subtraction estimator \eqref{eq:ts_estimator}. In particular, we demonstrate the bias scaling for first and second-order maps $\Phi_\eta^\alpha$. This is done with the one-dimensional Langevin dynamics, which allows to directly compute \eqref{eq:prop_eta_bias_bound}, at the level of operators, by discretizing the associated PDE. This enables a clear and effective demonstration of the result, which would have otherwise been challenging to achieve with usual stochastic approaches. \item Finally, we compute in \cref{subsec:num_LJ} the mobility and shear viscosity for a Lennard--Jones fluid. This aim is to demonstrate the usefulness and viability of the method in more practical, high-dimensional molecular dynamics settings, particularly by highlighting its variance reduction capabilities.
\end{itemize}

\subsection{Transient methods for Langevin dynamics}
\label{subsec:num_lang}
Although our transient method only considers equilibrium dynamics, it encodes the relevant nonequilibrium information through the conjugate  response function $S$, which is the key quantity allowing to obtain the transport coefficient. This is expressed through the first-order perturbation PDE \eqref{eq:varphi1_PDE}, whose solution depends on $S$. To define the latter function, let $F(q) \in \R^{d}$ represent an external forcing, chosen appropriately based on the transport coefficient under consideration. Particular choices for $F(q)$ are made precise for each scenario we consider in~\cref{subsec:num_LJ}. For all such scenarios, the associated conjugate response function $S$ is given by
\begin{equation}
    S(q,p) = \beta F(q)^\t M^{-1} p.
    \label{eq:conj_res}
\end{equation}
We remark that the formal definition of $S$ is based on the associated nonequilibrium dynamics, and relies on linear response theory to be rigorously derived. In the interest of clarity, we do not provide such a rigorous discussion, and instead refer the reader to \cite[Section 5.2.3]{lelievre2016} for a comprehensive discussion.

Having identified the appropriate conjugate response function~$S$, one can now construct the map~$\Phi_\eta^\alpha$, for~$\alpha=1,2$, by solving the associated PDEs~\eqref{eq:varphi1_PDE} and~\eqref{eq:varphi2_PDE}. 

\paragraph{First-order map $\varphi_1$} For convenience, let us first recall the expression for \eqref{eq:varphi1_PDE}: 
\begin{equation}
\notag
	\nabla^*\varphi_1 = \sum_{i=1}^d \partial_{x_i}^* \varphi_{1,x_i} = S.
 \end{equation}
For Langevin dynamics, it is natural to consider the position and momentum components of $\varphi_1$ by writing $\varphi_1(q,p) = (\varphi_{1,q}(q,p), \varphi_{1,p}(q,p))$, so that we can write $\nabla^*\varphi_1 = \nabla^*_q\varphi_{1,q} + \nabla^*_p\varphi_{1,p}$. More precisely, the action of the adjoint operators are given by
\begin{equation}
	\partial^*_{q_i} = - \partial_{q_i} + \beta\partial_{q_i} V , \qquad \partial^*_{p_i} = - \partial_{p_i} + \beta (M^{-1}p)_i,
	\label{eq:nabla_star_expressions}
\end{equation}
which can be obtained via integration by parts as in \eqref{eq:Astar_adjoint}. In view of \eqref{eq:nabla_star_expressions} and \eqref{eq:conj_res}, we can write~\eqref{eq:varphi1_PDE} more explicitly for Langevin dynamics as
\begin{equation}
\notag
	-\Div_q(\varphi_{1,q}) - \Div_p(\varphi_{1,p}) + \beta \nabla V^\t \varphi_{1,q} + \beta p^\t M^{-1} \varphi_{1,p} = \beta F(q)^\t M^{-1} p.
\end{equation}
Therefore, a natural solution for \eqref{eq:varphi1_PDE} in any dimension is
\begin{equation}
	\varphi_1(q,p) = \begin{pmatrix}
 	\varphi_{1,q}(q,p) \\ \varphi_{1,p}(q,p)
 	\end{pmatrix} =
	\begin{pmatrix}
 	0 \\ F(q)
 	\end{pmatrix},
 	\label{eq:varphi1_sol}
\end{equation}
and the transformation $\Phi^1_\eta$ is then given by
\begin{equation}
	\Phi_\eta^1(q,p) = 
	\begin{pmatrix}
 	  q \\ p + \eta F(q)
 	\end{pmatrix}.
  \label{eq:Phi1_map}
\end{equation}
Thus, constructing the initial conditions for a first-order transient trajectory simply amounts to shifting the initial momentum $p_0^0$ of some associated stationary equilibrium process by $\eta F(q_0^0)$.

\paragraph{Second-order map $\varphi_2$} Constructing the second-order map amounts to finding~$\varphi_2$ by solving~\eqref{eq:varphi2_PDE}, which we recall for convenience:
\begin{equation}
\notag
	\nabla^*\varphi_2 = -\frac{1}{2}\sum_{i,j=1}^d \partial_{x_i}^*\partial_{x_j}^* (\varphi_{2,x_i}\varphi_{2,x_j}) = -\frac{1}{2}(\nabla^*)^2 \colon \varphi_1\otimes \varphi_1.
\end{equation}
Substituting the solution \eqref{eq:varphi1_sol} for $\varphi_1$ in \eqref{eq:varphi2_PDE} leads to
\begin{align*}
    \nabla^*\varphi_2 &= -\frac{1}{2}(\nabla^*)^2\colon 
    \begin{pmatrix}
        0 \\ F    
    \end{pmatrix} \otimes 
    \begin{pmatrix}
        0 \\ F    
    \end{pmatrix} \equiv -\frac{1}{2}(\nabla_p^*)^2 \colon F\otimes F.
\end{align*}
Thus, as in the first-order case, one can choose $\varphi_{2,q} = 0$ so that $\varphi_2 = (0, \varphi_{2,p}(q,p))$. Next, recalling that~$\partial^*_{p_i} = -\partial_{p_i} + \beta (M^{-1}p)_i$,
\begin{align*}
    -\frac{1}{2}(\nabla_p^*)^2 \colon F\otimes F &= -\frac{1}{2}\sum_{i,j=1}^d \partial_{p_i}^*\partial_{p_j}^* \paren*{F_i F_j} \\
    &= -\frac{\beta}{2}\sum_{i,j=1}^d \bkt*{-\partial_{p_i} + \beta (M^{-1}p)_i}(M^{-1}p)_j F_i F_j \\
    &= -\frac{\beta^2}{2}\sum_{i,j=1}^d (M^{-1}p)_i(M^{-1}p)_j F_i F_j + \frac{\beta}{2} \sum_{i,j=1}^d \partial_{p_i}(M^{-1}p)_j F_iF_j \\
    &= -\frac{\beta^2}{2}\sum_{i,j=1}^d (M^{-1}p)_i(M^{-1}p)_j F_i F_j +\frac{\beta}{2}\sum_{i,j=1}^d [M^{-1}]_{j,i} F_i F_j \\
    &= -\frac{1}{2}\paren*{\beta p^\t M^{-1}F}^2 + \frac{1}{2}\beta F^\t M^{-1}F.
\end{align*}
Thus, a possible solution for the second-order term $\varphi_2$ is
\begin{equation}
\notag
    \varphi_2(q,p) = 
    \begin{pmatrix}
        0 \\ -\dfrac{\beta F(q)^\t M^{-1} p}{2}F(q)
    \end{pmatrix} = 
    \begin{pmatrix}
        0 \\ -\dfrac{1}{2}S(q,p)F(q)
    \end{pmatrix}.
\end{equation}
This leads to the second-order transformation
\begin{equation}
	\Phi_\eta^2(q,p) = 
	\begin{pmatrix}
 	  q \\ p + \eta F(q) - \dfrac{\eta^2}{2}F(q)S(q,p)
 	\end{pmatrix}.
  \label{eq:Phi2_map}
\end{equation}

\subsection{One-dimensional Langevin dynamics}
\label{subsec:num_1D}
We next present some numerical results showcasing the scaling of the finite $\eta$ bias for the first and second-order transformations $\Phi_\eta^\alpha$ derived in \cref{subsec:constructing_method}. As stated in \cref{cor:gk_equiv}, in particular~\eqref{eq:cor_eta_bias}, an estimator of order~$\alpha$ has bias~$\bigO(\eta^\alpha)$: 
\begin{equation}
\label{eq:bias_1D_Langevin}
	\abs*{\frac{1}{\eta}\int_0^{+\infty} \E\bigl(R(X^\eta_t)\bigr) \, dt - \rho} \leq \mathcal{C}\eta^{\alpha}.
\end{equation}
Note that we did not truncate the time-integral in the estimator above as the finite-time integration bias vanishes as $T\to+\infty$, allowing us to solely quantify the $\eta$ bias. In view of~\eqref{eq:cor_Tint_to_LinvR}, and denoting by $\mathcal{R}$ the solution to the Poisson equation $-\L\mathcal{R} = R$,  we can rewrite~\eqref{eq:bias_1D_Langevin} as
\begin{equation}
\abs*{\frac{1}{\eta}\int_\mathcal{X} \mathcal{R} \circ \Phi_\eta^\alpha \, d\mu - \int_\mathcal{X} \mathcal{R} S \, d\mu} \leq \mathcal{C}\eta^{\alpha},
\label{eq:bias_1d-lang}
\end{equation}
where we used that~$\mathcal{R} = -\L^{-1}R$ has average~0 with respect to~$\mu$. We write the bias in the form~\eqref{eq:bias_1d-lang} since the low-dimensionality of the system in consideration allows us to directly compute the bias by discretizing $\L$ and solving the PDE $-\L\mathcal{R} = R$. Note that the bias result presented here holds for both the naive transient \eqref{eq:T_estimator} and subtraction~\eqref{eq:ts_estimator} estimators.

\paragraph{Choice of observable} We consider $\mathcal{X} = 2\pi \T \times \R$ and the following observable, which has average 0 with respect to~$\mu$ by construction: 
\begin{equation}
\notag
	R(q,p) = \paren*{\cos(q) - \sin(q)} \, \e^{\beta V(q)}.
\end{equation}
This choice is also considered in order to avoid symmetries in the response function (which may occur for typical observables such as $p$ and $\nabla V$) so that the results are clearly presented; see \cite[Section~4.2]{spacek2023}, for a more detailed discussion regarding the symmetries and the observable. Furthermore, the forcing $F$ in consideration is a normalized constant force, i.e., $F = 1$.

\paragraph{Numerically estimating the bias} The low dimensionality of this example allows us to analytically compute \eqref{eq:bias_1d-lang} through a direct approximation of $\mathcal{R}$ via finite-difference methods, and the use of quadratures for the associated integrals over the phase-space; see \cite[Appendix B]{spacek2023} for precise details on the numerical implementation of the finite-difference scheme. The unbounded momentum domain is truncated to $[-p_\mathrm{max},p_\mathrm{max}]$, with $p_\mathrm{max} = 5$. The domain $[-\pi,\pi] \times [-p_\mathrm{max},p_\mathrm{max}]$ is then discretized into $m_q = 200$ by $m_p = 400$ points with uniform step sizes $\Delta q = 2\pi/m_q$ and $\Delta p = 2p_\mathrm{max}/(m_p-1)$, as we consider periodic boundary conditions in~$q$.

We consider two maps $\Phi_\eta^\alpha$, for $\alpha=1,2$, constructed from $\varphi_1$ and $\varphi_2$ obtained in \cref{subsec:num_lang}, as given by~\eqref{eq:Phi1_map} and~\eqref{eq:Phi2_map},
\begin{equation}
\notag
\begin{cases}
\begin{aligned}
	&\Phi_\eta^1(q,p) = (q, p + \eta), \\
	&\Phi_\eta^2(q,p) = \paren[\bigg]{q, p + \eta - \eta^2\frac{\beta}{2}p}.
\end{aligned}
\end{cases}
\end{equation}
The bias \eqref{eq:bias_1d-lang} was computed for various values of $\eta$. The results are shown in \cref{fig:1d_lang_bias} in a log-log scale, with reference lines included. This confirms that the bias associated with an $\alpha$-ordered map is itself of order $\alpha$, which is the main estimate of \cref{prop:gen_subtraction}.

\begin{figure}[ht]
	\centering
	\includegraphics[width=0.75\textwidth]{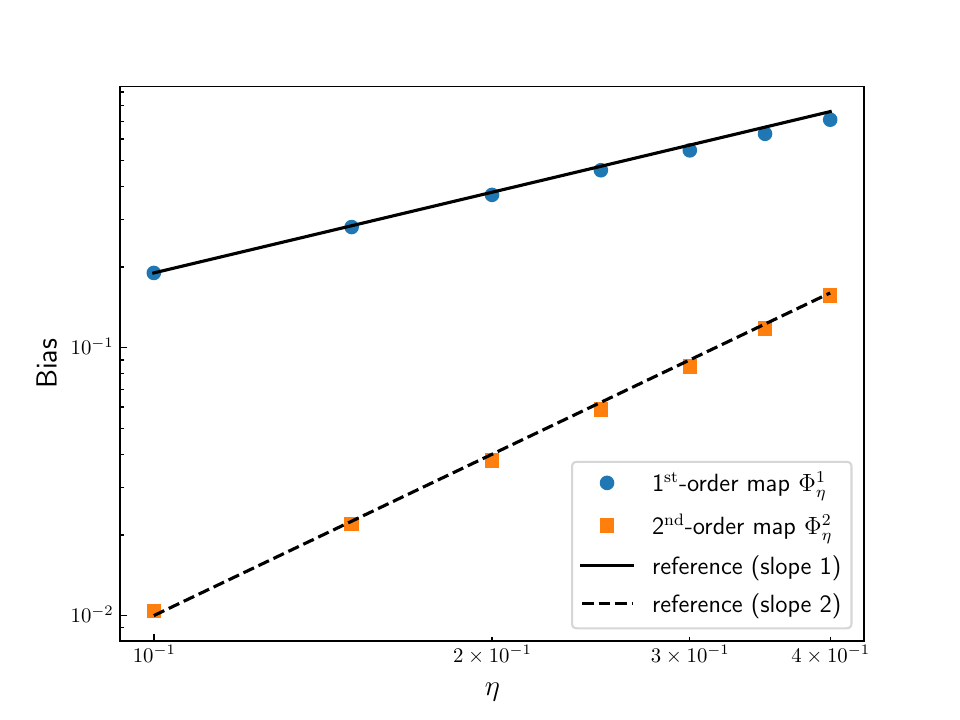}
	\caption{Bias \eqref{eq:bias_1d-lang} as a function of $\eta$ for the first and second-order maps, with overlayed reference lines.}
	\label{fig:1d_lang_bias}
\end{figure}

\subsection{Mobility and shear viscosity for Lennard--Jones fluids}
\label{subsec:num_LJ}
We next present some numerical results highlighting the variance-reduction potential of the transient subtraction method. The example in consideration is the computation of shear viscosity and mobility for a Lennard--Jones fluid. The system is composed of~$N$ particles in spatial dimension $D=3$ (so that~$d=3N$), evolving according to the underdamped Langevin dynamics \eqref{eq:lang_dynamics} on the domain $\mathcal{X} = (L\T)^{3N} \times \R^{3N}$. The potential energy corresponds to the sum of pairwise interactions
\begin{equation}
\notag
	V(q) = \sum_{1\leq i<j\leq N} v(\norm{q_i - q_j}),
\end{equation}
with $v(r)$ given by the standard 12-6 Lennard--Jones interaction potential:
\begin{equation}
	v(r) = 4\varepsilon\bkt*{\paren*{\frac{\sigma}{r}}^{12} - \paren*{\frac{\sigma}{r}}^6}.
	\label{eq:LJ_pairwise}
\end{equation}
The parameter $\varepsilon$ represents the depth of the potential well, and $\sigma$ determines the length scale (more precisely, $v'(2^{1/6}\sigma)=0$ so that interactions are attractive or repulsive depending on whether the distance is larger or smaller than $2^{1/6}\sigma$).
In practice, one truncates the range of~\eqref{eq:LJ_pairwise} at some value $r_\mathrm{c}$, after which interactions can be deemed negligible. We employ the truncated shifted-force cutoff method with a cutoff value of~$r_\mathrm{c}=2.5\sigma$, resulting in the modified potential
\begin{equation}
\notag
	v_\mathrm{SF}(r) = [v(r) - v(r_\mathrm{c}) - (r - r_\mathrm{c})v'(r_\mathrm{c})]\ind_{r\leq r_\mathrm{c}}.
\end{equation}
We numerically integrate the Langevin dynamics \eqref{eq:lang_dynamics} using the BAOAB splitting scheme~\cite{leimkuhler2013}, which allows to consistently sample the canonical measure. The simulations were conducted using with the \texttt{Molly.jl} package \cite{greener2024} in the Julia language, and were performed in dimensionless reduced units with $\sigma = \varepsilon = k_\mathrm{B} = 1$ and the mass matrix~$M = \mathrm{Id}$ on a cubic domain with side length $L = (N/\varrho)^{1/3}$, with $\varrho$ the particle density. For both shear viscosity and mobility computations, the results we present correspond to averages over $K = 10^5$ realizations of the system with i.i.d.\ initial conditions. 

We next describe the strategy for initializing and evaluating the trajectories, which is procedurally identical for the mobility and shear cases. Each independent realization of the system is initialized as follows. For the equilibrium control system, initial momenta were sampled from the Boltzmann--Gibbs measure, while initial positions were initialized on a cubic lattice. The system was then evolved for a thermalization time of $T_\mathrm{therm} = 1$ with a timestep size $\Delta t=10^{-3}$ in reduced units (the reference time being~$\sigma\sqrt{m/\varepsilon}$). We ensured that the thermalization time was sufficient long to melt the crystal structure and relax the system to a stationary state, as monitored by the stabilization of kinetic and potential energies, and by visual inspection of the molecular structure. Next, we initialize the transient trajectory by applying the transformation \eqref{eq:LJ_gen_map} to a copy of the stationary equilibrium system:
\begin{align}
    \begin{pmatrix}
        q_0^\eta \\ p_0^\eta
    \end{pmatrix}
	 = \Phi_\eta(q_0^0,p_0^0) = 
	\begin{pmatrix}
 	  q_0^0 \\ p_0^0 + \eta F(q_0^0)
 	\end{pmatrix},
 	\label{eq:LJ_gen_map}
\end{align}
where the expression for $F(q)$ is made precise for mobility and shear viscosity in \cref{subsubsec:mobility_num,subsubsec:shear}, respectively. The equilibrium and transient trajectories are then evolved simultaneously according to synchronously coupled standard equilibrium dynamics. The integration time $T$ should not be much larger than the relaxation time of the transient trajectory, as decoupling becomes a significant source of error. Nevertheless, this can be overcome during postprocessing, during which one can choose the appropriate truncation time for the estimator. 
Observational runs should be performed beforehand to have an approximate idea of the order of magnitude of the relaxation time (which varies significantly depending on the system at hand), which can be deduced from reasonably coarse and inexpensive runs. All simulation parameters are made precise in \cref{table:sim_params}.
\begin{table}[h!]
\centering
\begin{tabular}{@{}lcc@{}}
	\toprule
	\bf{Parameter} & {\bf Shear} & {\bf Mobility} \\
	\midrule
Integration time ($T$) & 3.5 & 2.0 \\
    Thermalization time ($T_\mathrm{therm}$) & 1 & 1 \\
	No. of realizations ($K$) & $10^5$ & $10^5$ \\
	Timestep ($\Delta t$) & $10^{-3}$ & $10^{-3}$ \\
	Inverse temp. ($\beta$) & 1.25 & 0.8 \\
	Damping ($\gamma$) & 1 & 1 \\
	No. of particles ($N$) & 1000 & 1000 \\
	Particle density ($\varrho$ [$N/L^3$]) & 0.7 & 0.6 \\
	LJ cutoff ($r_\mathrm{c}$) & 2.5 & 2.5 \\
	Mass matrix $M$ & {\rm Id} & {\rm Id} \\
	LJ param. $(\sigma)$ & 1 & 1 \\
	LJ param. $(\varepsilon)$ & 1 & 1 \\
	Boltzmann const. ($k_B$) & 1 & 1 \\
	\bottomrule
\end{tabular}
\caption{Simulation parameters}\label{table:sim_params}%
\end{table}

\subsubsection{Mobility}
\label{subsubsec:mobility_num}
When under the effect of a constant external field $F$, the mobility quantifies the particles' average velocity in the direction of the applied field. In our Lennard--Jones fluid example, we consider a constant force applied in the $x$-direction, and in particular we consider colored drift, which amounts to perturbing half the particles to one direction, and the other half in the opposite direction \cite{evans2007}:
\begin{equation}
\notag
	F = \frac{1}{\sqrt{N}}(F_1,F_2,\dotsc,F_N)^\t \in \R^{3N}, \qquad F_i = ((-1)^{i+1}, 0, 0), \qquad i = 1,\dotsc,N.
\end{equation}
The observable we consider is the velocity in the direction $F$, a standard choice for mobility computations \cite[Section 5.2.2]{lelievre2016}:
\begin{equation}
\notag
	R(q,p) = F^\t M^{-1}p.
\end{equation}
\begin{figure}[h!]
\centering
\begin{subfigure}{0.49\textwidth}
    \includegraphics[width=\textwidth]{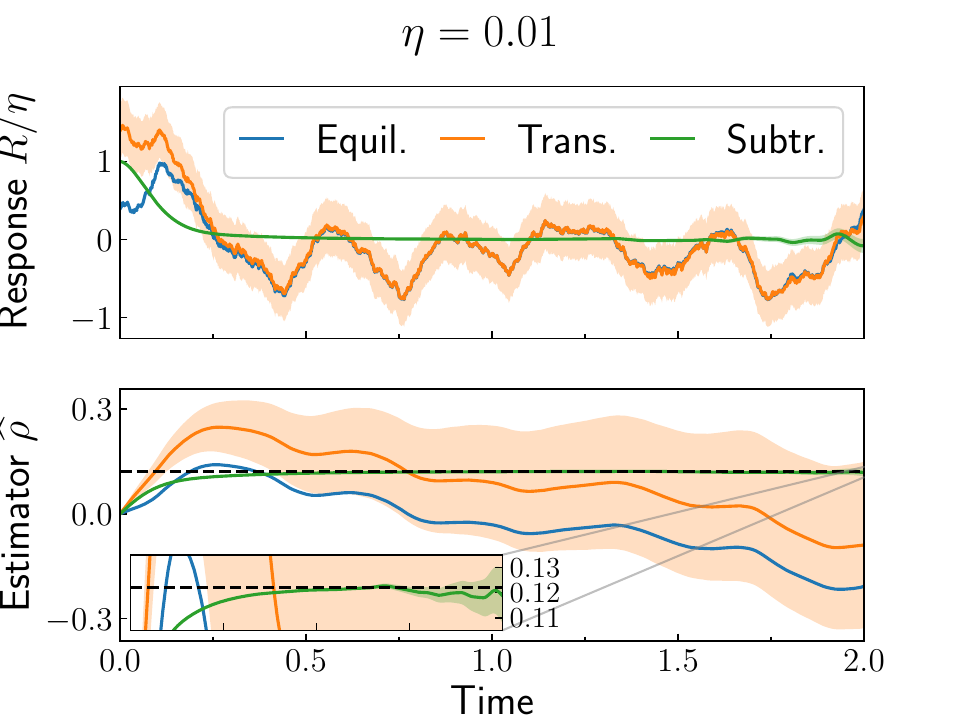}
    \caption{Data for $\eta = 0.01$.}
    \label{subfig:mobility_0.01}
\end{subfigure}
\hfill
\begin{subfigure}{0.49\textwidth}
    \includegraphics[width=\textwidth]{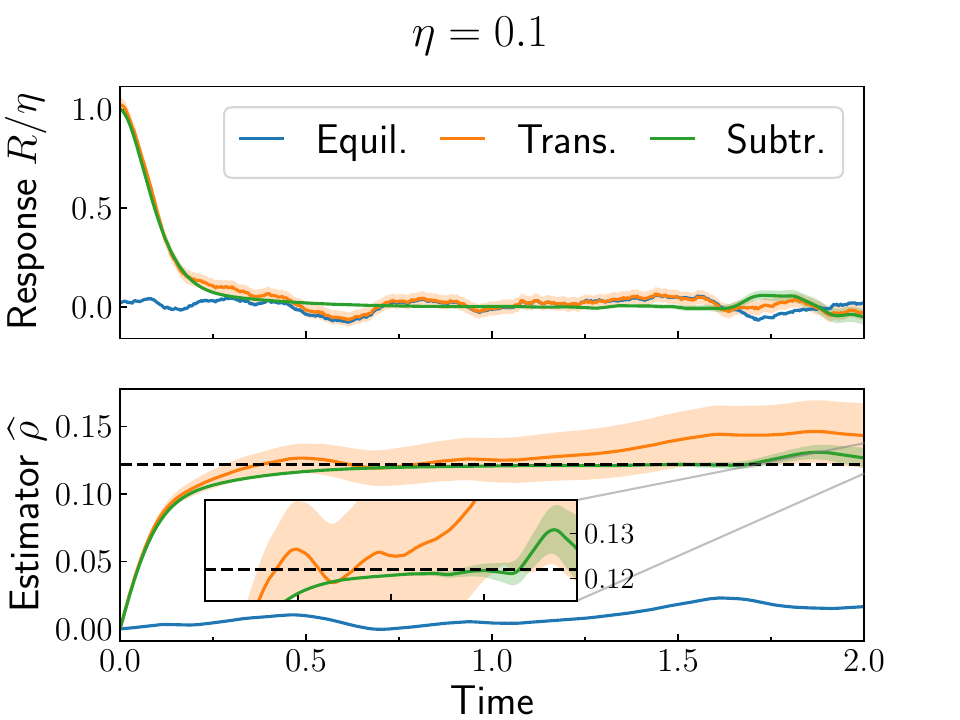}
    \caption{Data for $\eta = 0.1$.}
    \label{subfig:mobility_0.1}
\end{subfigure}
\caption{Trajectories for the computation of the mobility of a Lennard--Jones fluid with colored drift and associated error bars. The top graphs correspond to the instantaneous response (normalized by~$\eta$) as a function of time as transient trajectory relaxes, while the bottom graphs show the integrated response over time. The dashed line corresponds to the reference value~$\rho = 0.122$ obtained in~\cite{meier2004, blassel2024}.}
\label{fig:LJ_mobility}
\end{figure}

\begin{table}[h!]
\begin{subtable}[h]{0.45\textwidth}
\centering
\begin{tabular}{@{}lccc@{}}
\toprule
 & \multicolumn{2}{c}{\textbf{Variance at $T=1$}} &  \\
\cmidrule(lr){2-3}
 $\eta$ & \textbf{Naive} & \textbf{Subtraction} & \textbf{Ratio} \\
\midrule
0.01 & $\sn{2.66}{3}$ & $\sn{4.69}{-3}$ & $\sn{5.67}{5}$ \\
0.1  & 26.6   & $\sn{4.65}{-3}$ & $\sn{5.71}{3}$  \\
1.0  & 0.265    & $\sn{4.37}{-3}$ & 60.6    \\
\bottomrule
\end{tabular}
\caption{Data at $T=1$ (start of decoupling)}
\end{subtable}\hfill
\begin{subtable}[h]{0.45\textwidth}
\centering
\begin{tabular}{@{}lccc@{}}
\toprule
 & \multicolumn{2}{c}{\textbf{Variance at $T=2$}} &  \\
\cmidrule(lr){2-3}
 $\eta$ & \textbf{Naive} & \textbf{Subtraction} & \textbf{Ratio} \\
\midrule
0.01 & $\sn{5.70}{3}$ & 11.8 & 485 \\
0.1  & 57.2   & 5.52  & 10.4  \\
1.0  & 0.564    & 0.286  & 1.97   \\
\bottomrule
\end{tabular}
\caption{Data at $T=2$ (total decoupling)}
\end{subtable}
\caption{Comparison of variances between naive and subtraction transient estimators for various values of $\eta$ for the computation of mobility.}
\label{table:LJ_mobility}
\end{table}

Each trajectory is integrated for a physical time $T=2$. Although the system relaxes significantly before, we deliberately wanted to observe the decoupling point, which can be easily spotted in \cref{fig:LJ_mobility}, which presents the average trajectories for two values of $\eta$. The associated error bars shown in \cref{fig:LJ_mobility} were computed with empirical averages over the independent realizations. Due to the large signal-to-noise ratio of the mobility response, the subtraction's uniform bound in $\eta$ of the variance indeed shows to make a difference, as can readily be seen from \cref{fig:LJ_mobility}.

To quantitatively assess the variance reduction and the decoupling effect, we consider the variance values for both the naive transient and subtraction methods at two different times $T$: one right before trajectories decouple $T=1$, and one at the final time $T=2$. These results are summarized in \cref{table:LJ_mobility}. At $T=1$, we indeed see the variance's uniform bound in $\eta$ for the subtraction trajectory, while the $\eta^{-2}$ factor shows for the naive trajectory. Additionally, we notice that at $T=2$, even after significant decoupling, the subtraction method still provides significant variance reduction, even long after relaxation has occured.

\subsubsection{Shear viscosity}
\label{subsubsec:shear}
The shear viscosity of a fluid can be computed in a variety of ways; see \cite{todd2007} for a review on computational techniques. In this work, we consider a setting based on the transverse force-field method \cite{gosling1973,joubaud2012} with a sinusoidal forcing profile with spatial domain $(L_x\T \times L_y\T \times L_z\T)^N$. We denote by~$F_i\in\R^3$ the force acting on the $i$th particle:
\begin{equation}
\notag
    F_i = (f(q_{i,y}), 0, 0)^\t, \qquad  f(y) = \sin\paren*{\frac{2\pi y}{L_y}}.
\end{equation}
The force acts on the $x$-component of the momenta based on the particle's $y$-coordinate position.
The observable $R$ of interest is the imaginary part of the first empirical Fourier coefficient $U_1$:
\begin{equation}
    R(q,p) = \Im(U_1), \qquad U_1 = \frac{1}{N}\sum_{n=1}^N (M^{-1}p)_{n,x}\exp\paren*{\frac{2\mathrm{i}\pi q_{n,y}}{L_y}}.
    \label{eq:shear_observable}
\end{equation}
The initialization and evaluation of trajectories for this system were performed as described in \cref{subsec:num_LJ} with $L_x=L_y=L_z=L$. The numerical results are similar to those shown in \cref{subsubsec:mobility_num}, with largely the same interpretations and conclusion. A first difference, clearly seen from \cref{fig:LJ_shear}, however, is the magnitude of the error for the naive trajectories for which confidence intervals are much smaller than for \cref{fig:LJ_mobility}. This is a trivial artifact of the observable $R(q,p)$: for the mobility, the observable is $\bigO(\sqrt{N})$, while it is $\bigO(1)$ for the shear case, since \eqref{eq:shear_observable} corresponds to some spatial averaging. Secondly, the relaxation time for the shear trajectories is significantly longer compared to the one for mobility, and in fact almost coincides with the decoupling time. Nonetheless, \cref{table:LJ_shear} shows that variance reduction is still obtained.

\begin{figure}[h!]
\centering
\begin{subfigure}{0.49\textwidth}
    \includegraphics[width=\textwidth]{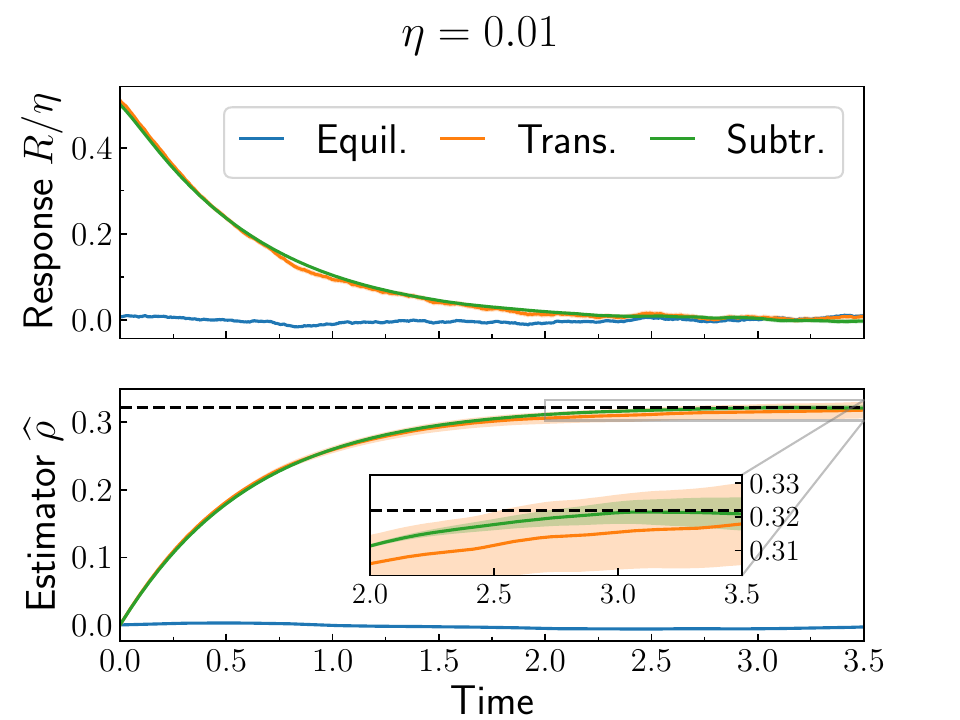}
    \caption{Data for $\eta = 0.01$.}
    \label{subfig:shear_0.01}
\end{subfigure}
\hfill
\begin{subfigure}{0.49\textwidth}
    \includegraphics[width=\textwidth]{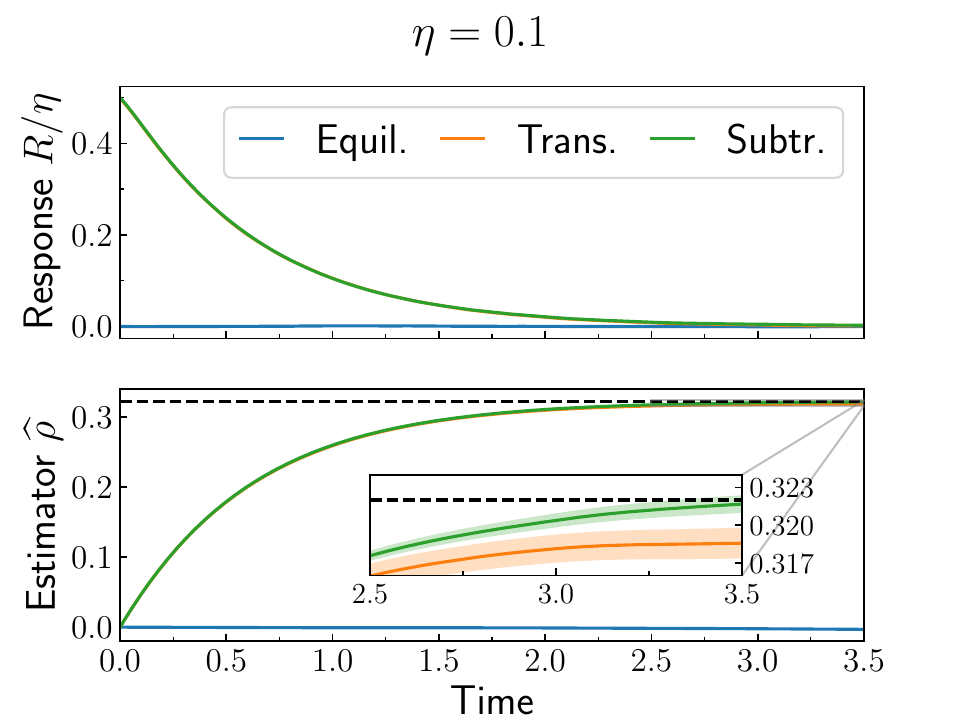}
    \caption{Data for $\eta = 0.1$.}
    \label{subfig:shear_0.1}
\end{subfigure}
\caption{Trajectories for the computation of the shear viscosity of a Lennard--Jones fluid and associated error bars. The top graphs correspond to the instantaneous response (normalized by~$\eta$) as a function of time as transient trajectory relaxes, while the bottom graphs show the integrated response over time. The dashed line corresponds to the reference value $U_1 = 0.322$ found in~\cite{blassel2024}.}
\label{fig:LJ_shear}
\end{figure}

\begin{table}[h!]
\begin{subtable}[h]{0.45\textwidth}
\centering
\begin{tabular}{@{}lccc@{}}
\toprule
 & \multicolumn{2}{c}{\textbf{Variance at $T=2$}} &  \\
\cmidrule(lr){2-3}
$\eta$ & \textbf{Naive } & \textbf{Subtraction } & \textbf{Ratio} \\
\midrule
0.01 & 7.32 & 0.0260 & 281 \\
0.1 & 0.0726 & $\sn{5.49}{-3}$ & 13.2 \\
\bottomrule
\end{tabular}
\caption{Data at $T=2$ (start of decoupling)}
\end{subtable}\hfill
\begin{subtable}[h]{0.45\textwidth}
\centering
\begin{tabular}{@{}lccc@{}}
\toprule
 & \multicolumn{2}{c}{\textbf{Variance at $T=3.5$}} &  \\
\cmidrule(lr){2-3}
 $\eta$ & \textbf{Naive } & \textbf{Subtraction } & \textbf{Ratio} \\
\midrule
0.01 & 15.0 & 2.45 & 6.12 \\
0.1  & 0.150  & 0.0487 & 3.07 \\
\bottomrule
\end{tabular}
\caption{Data at $T=3.5$ (total decoupling)}
\end{subtable}
\caption{Comparison of variances between naive and subtraction transient estimators for various values of $\eta$ for the computation of shear viscosity.}
\label{table:LJ_shear}
\end{table}

\section{Conclusion and perspectives}
\label{sec:conclusion}

We presented a variance reduction method to compute transport coefficients based on a transient approach with a control variate, where the trajectories which relax are synchronously coupled to equilibrium ones, with initial conditions perturbed off equilibrium. The numerical results in \cref{sec:numerical_lang} show significant variance-reduction potential, suggesting this method is viable as its implementation is neither complex nor expensive; in fact, it is roughly twice the cost of a typical run due to the control system, so that the computational overhead is more than compensated by the variance reduction. For general systems, the bottleneck lies in the construction of the transformation~$\Phi_\eta$, as the PDEs \eqref{eq:varphi1_PDE} and \eqref{eq:varphi2_PDE} might not have a readily available solution for some given conjugate response~$S$ of interest. 

This works calls for several extensions. A particularly appealing one is to explore other types of couplings. For the systems we considered, synchronous coupling was largely successful in keeping the trajectories sufficiently close during the transient relaxation. For the shear viscosity example, relaxation and decoupling almost coincided, which suggests that a less dissipative system is likely to undergo decoupling significantly before convergence. Such scenarios motivate exploring more robust coupling strategies to delay decoupling, such as the ones described in~\cite{guillin2012,monmarche2024,chak2024,darshan2024}. 

Another topic of interest is to carefully compare the transient subtraction approach studied here and the TTCF method. From a theoretical perspective, the estimates obtained in our work lead to the following conclusions:
\begin{itemize}
\item Concerning the bias of the estimators, one should compare the result of \cref{cor:gk_equiv} for the transient subtraction approach, and~\eqref{eq:bias_TTCF}. The latter equality shows that the bias of the TTCF estimator does not depend on the forcing magnitude~$\eta$ and only involves a term exponentially small in the integration time; while the bias of the transient subtraction techniques involves an additional term of order~$\eta^\alpha$, which makes the latter approach unsuitable for large forcing magnitudes.
\item Concerning the variance of the estimators, one should compare the result of \cref{prop:var_ts} for the transient subtraction approach, and~\eqref{eq:asymptotic_variance_TTCF} for the recentered TTCF estimator. In both cases, the variance can be bounded uniformly in~$\eta$ (in particular in the linear response regime~$\eta \to 0$). The difference between the two estimators comes from the time dependence of the variance, which can be uniformly bounded for the transient subtraction method when the dynamics is dissipative, or growing exponentially when the dynamics is unstable (positive Lyapunov exponents); while it grows linearly in time for the recentered TTCF estimator. In practice, the integration time is of the order of the relaxation time, and so it is difficult to draw general conclusions here. We however believe that the transient subtraction technique will be relevant mostly for systems with small enough relaxation times in view of the bound provided by \cref{prop:var_ts}.
\end{itemize}
The main point in the above analysis is that the variance, which is usually the dominant error term, is uniformly bounded with respect to~$\eta$ for both methods. The difference comes from the time dependence of the variance, which we anticipate to be model dependent. This suggests numerically comparing both methods on relevant systems to assess their relative performance; for instance, polymer melts~\cite{pan2006}, confined fluids~\cite{bernardi2012, bernardi2016}, or one-dimensional atom chains (for which relaxation times can diverge as the system size increases)~\cite{lepri2003, dhar2008, iubini2020}.

\section*{Declarations}
\paragraph{Acknowledgements} This project has received funding from the European Union's Horizon 2020 research and innovation program under the Marie Sklodowska--Curie grant agreement No 945332, and from the European Research Council (ERC) under the European Union's Horizon 2020 research and innovation programme (project EMC2, grant agreement No 810367). We also acknowledge funding from the Agence Nationale de la Recherche, under grants ANR-19-CE40-0010-01 (QuAMProcs) and ANR-21-CE40-0006 (SINEQ). 

\paragraph{Conflict of interest} The authors have no relevant financial or non-financial interests to disclose.

\paragraph{Data availability} All code used to generate the presented in this work can be found in the GitHub repository \url{https://github.com/renatospacek/TransientSubtraction}. 

\printbibliography

\end{document}